\definecolor{red}{RGB}{255,25,25}
\definecolor{blue}{RGB}{25,50,200}
\newtheorem{theorem}{Theorem}[section]
\crefname{theorem}{Theorem}{Theorems}
\newtheorem{lemma}[theorem]{Lemma}
\crefname{lemma}{Lemma}{Lemmas}
\crefname{proposition}{Proposition}{Propositions}
\newtheorem{prop}[theorem]{Proposition}
\crefname{prop}{Proposition}{Propositions}
\newtheorem{corollary}[theorem]{Corollary}
\crefname{corollary}{Corollary}{Corollaries}
\crefname{cor}{Corollary}{Corollaries}
\crefname{conjecture}{Conjecture}{Conjectures}
\crefname{conj}{Conjecture}{Conjectures}
\newtheorem*{conj*}{Conjecture}
\crefname{conj}{Conjecture}{Conjectures}
\theoremstyle{definition}
\crefname{definition}{Definition}{Definitions}
\crefname{defn}{Definition}{Definitions}
\crefname{example}{Example}{Examples}
\crefname{notation}{Notation}{Notation}
\newtheorem*{notation*}{Notation}
\crefname{notation}{Notation}{Notation}
\crefname{problem}{Problem}{Problems}
\newtheorem{question}[theorem]{Question}
\crefname{question}{Question}{Questions}
\crefname{condition}{Condition}{Conditions}
\crefname{assumption}{Assumption}{Assumptions}
\theoremstyle{remark}
\newtheorem{rmk}[theorem]{Remark}
\crefname{rmk}{Remark}{Remarks}
\newtheorem*{rmk*}{Remark}
\crefname{rmk}{Remark}{Remarks}
\newtheorem{remark}[theorem]{Remark}
\crefname{remark}{Remark}{Remarks}
\crefname{fact}{Fact}{Facts}
\crefname{claim}{Claim}{Claims}
\newtheorem*{claim*}{Claim}
\crefname{claim}{Claim}{Claims}
\crefname{step}{Step}{Steps}
\crefname{case}{Case}{Cases}
\numberwithin{equation}{section}
\renewcommand{\lhd}{\trianglelefteq}
\newcommand{\bC}{\mathbf{C}}
\newcommand{\bF}{\mathbf{F}}
\newcommand{\bQ}{\mathbf{Q}}
\newcommand{\bR}{\mathbf{R}}
\newcommand{\bZ}{\mathbf{Z}}
\newcommand{\Aut}{\operatorname{Aut}}
\newcommand{\NS}{\operatorname{NS}}
\begin{document}

\title[Derived length of zero entropy groups]{Derived length of zero entropy groups acting on projective varieties in arbitrary characteristic--A remark to a paper of Dinh-Oguiso-Zhang}

\author{Sichen Li}
\address{School of Mathematical Sciences, Fudan University, Shanghai 200433, People's Republic of China
\endgraf Department of Mathematics, National University of Singapore, Block S17, 10 Lower Kent Ridge Road, Singapore 119076}
\email{\href{mailto:lisichen123@foxmail.com}{lisichen123@foxmail.com}}
\urladdr{\url{https://www.researchgate.net/profile/Sichen_Li4}}
\begin{abstract}
Let $X$ be a projective variety of dimension $n\ge1$ over an algebraically closed field of arbitrary characteristic.
We prove a Fujiki-Lieberman type theorem on the structure of  the automorphism group of $X$.
Let $G$ be a group of zero entropy automorphisms of $X$ and $G_0$ the set of elements in $G$ which are isotopic to the identity.
We show that after replacing $G$ by a suitable finite-index subgroup, $G/G_0$ is a unipotent group of the derived length at most $n-1$.
This result was first proved by Dinh, Oguiso and Zhang for compact K\"ahler manifolds.
\end{abstract}

\subjclass[2010]{
14G17, 
14J50, 
37B40, 
14C25. 
}

\keywords{positive characteristic, automorphism of varieties, dynamics, zero entropy, unipotent group, derived length}

\thanks{The author was partially supported  by  the China Scholar Council `High-level university graduate program'.}

\maketitle

\section{Introduction}
\label{sec:intro}
Let $X$ be a projective variety of dimension $n\ge1$ over an algebraically closed field $k$ of arbitrary characteristic.
It is well known that the automorphism group scheme $\Aut_X$ of a projective variety $X$ is locally of finite type over $k$ and $\Aut (X)=\Aut_X(k)$; in particular, the reduced neutral component $(\Aut_X^0)_{\mathrm{red}}$ of $\Aut_X$ is a smooth algebraic group over $k$ (cf. \cite[\S7]{Brion17}).
Denote $(\Aut_X^0)_{\mathrm{red}}(k)$ by $\Aut_0(X)$.
 If $G$ is a subgroup of $\Aut(X)$, define
\begin{equation*}
                                   G_0:=G\cap\Aut_0(X).
\end{equation*}
 Denote  by $\NS(X)=\mathrm{Pic}(X)/\mathrm{Pic}^0(X)$ the {\it N\'eron-Severi group} of $X$, i.e., the finitely generated abelian group of Cartier divisors on $X$ modulo algebraic equivalence.
 For a field $\bF=\bQ, \bR$ or $\bC$, the $\bF$-vector space $\NS_{\bF}(X)$ stands for $\NS(X)\otimes_{\bZ}\bF$; it is a finite-dimensional $\bF$-vector space.
Define the {\it first dynamcial degree} of an automorphism $g\in\Aut(X)$ as the {\it spectral radius}  of its natural action $g^*$ on $\NS_{\bR}(X)$, i.e.,
\begin{equation*}
                             d_1(g):=\rho(g^*|_{\NS_{\bR}(X)}):=\max\bigg\{|\lambda|: \lambda \text{ is an eigenvalue of } g^*|_{\NS_{\bR}(X)}\bigg\}.
\end{equation*}
We say that $g$ is of {\it positive entropy} if $d_1(g)>1$, otherwise it is of {\it zero~entropy}.
We call $G$ of {\it positive entropy}, if every element of $G\backslash\{\mathrm{id}\}$ is of positive entropy.

For a subgroup $G$ of the automorphism group $\Aut(X)$, we define the {\it zero-entropy subset of $G$} as
\begin{equation*}
N(G):= \bigg\{ g \in G : g \text{ is of zero entropy, i.e., } d_1(g) = 1 \bigg\}.
\end{equation*}
We call $G$ of  {\it zero entropy}, if $N(G) = G$.
For the study of dynamical degrees, we refer to \cite[\S4]{DS17} as a survey and \cite{Dang17,Truong18,Hu19} in arbitrary characteristic.

It is known that $\Aut(X)$ satisfies a Tits alternative and for solvable subgroups of $\Aut(X)$, the positive entropy part has a "bounded" size.
Precisely, the following result of the case of compact K$\mathrm{\ddot a}$hler manifolds or complex projective varieties with mild singularities was proved in \cite{Zhang09-Invent,CWZ14}, and of the case of projective varieties in arbitrary characteristic  was proved in \cite{Hu19}.
\begin{theorem}
\label{thm:Zhang-A}
Let $X$ be a projective variety of dimension $n\ge1$ and $G$ a subgroup of $\Aut(X)$.
Suppose that $G$ does not contain any non-abelian free subgroup.
Then there is a finite-index subgroup $G'$ of $G$ such that the quotient group $G'/N(G')$ is a free abelian of rank $r \le n-1-\max\{0,\kappa(X),\kappa(\omega_{X^v})\}$.
Here,  $\kappa(X)$ and $\kappa(\omega_{X^v})$ denote  the Kodaira dimension of $X$ and the Kodaira-Iitaka dimension of normalization $X^v$ of $X$.
\end{theorem}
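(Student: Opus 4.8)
The plan is to transfer the statement to the linear action of $G$ on $V:=\NS_{\bR}(X)$, to use that this action is by automorphisms of the lattice $\NS(X)$ preserving the salient, full-dimensional cone $\Nef(X)\subseteq V$, and to read off the numerical bound from intersection theory of nef classes. Let $N:=\dim_{\bR}V$, let $\psi\colon G\to\GL(V)$, $g\mapsto g^*$, and put $G^*:=\psi(G)$. Since $g^*$ preserves the lattice $\NS(X)$ its eigenvalues are algebraic integers, and since $g^{-1}$ is again of zero entropy whenever $g$ is, both $g^*$ and $(g^{-1})^*$ then have spectral radius $1$; by Kronecker's theorem $g^*$ is quasi-unipotent in that case. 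Hence $N(G)=\{g\in G: g^*\text{ is quasi-unipotent}\}$, $\ker\psi\subseteq N(G)$, and $G/N(G)\cong G^*/\psi(N(G))$. As $G^*$ contains no non-abelian free subgroup, the Tits alternative makes it virtually solvable; after replacing $G$ by a finite-index subgroup I may assume $G^*$ is solvable with connected Zariski closure $\overline{G^*}$, and --- solvable subgroups of $\GL_N(\bZ)$ being polycyclic --- that $G^*$ is finitely generated.

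Next I would analyse connected solvable groups preserving a salient cone. By Lie--Kolchin, $\overline{G^*}$ is conjugate over $\bC$ into upper-triangular form; let $\chi_1,\dots,\chi_N$ be the diagonal characters. A Perron--Frobenius/Birkhoff-type argument for solvable groups preserving $\Nef(X)$ (in the spirit of \cite{Zhang09-Invent,CWZ14,DS17}) shows that each $g\mapsto\log|\chi_i(g)|$ descends to a group homomorphism $G^*\to\bR$, that their common kernel is exactly $\psi(N(G))$, and that the leading eigenvalue of $g^*$ is attained on a common nef eigenvector, so $d_1(g)$ equals one of the $|\chi_i(g)|$. Then $g\mapsto(\log|\chi_i(g)|)_i$ embeds $G^*/\psi(N(G))$ into $\bR^N$; since $G^*$ is finitely generated, the image --- a finitely generated subgroup of $\bR^N$ --- is free abelian of some finite rank $r$, and therefore so is $G/N(G)$.

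The bound on $r$ is the geometric core. Choosing representatives $g_1,\dots,g_r$ of a basis and, after a common finite-index adjustment, a simultaneous flag of nef classes, I would argue as Dinh--Sibony do: an automorphism fixes degrees of $0$-cycles, so an intersection number $v_{i_1}\cdots v_{i_n}$ of nef eigenvectors is simultaneously multiplied by a product of eigenvalues $>1$ and left invariant, hence vanishes; feeding this into the mixed Khovanskii--Teissier and Hodge-index inequalities forces one ``dynamical direction'' to be lost, i.e.\ $r\le n-1$. For the refinement, note $g^*K_X=K_X$ for every $g$, and --- lifting the $G$-action along the normalisation $\nu\colon X^v\to X$ --- also $g^*K_{X^v}=K_{X^v}$, with $d_1$ unchanged under the lift. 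If $\kappa(X)\ge 0$ (resp.\ $\kappa(\omega_{X^v})\ge 0$), the Iitaka fibration of $K_X$ (resp.\ of $K_{X^v}$) is $G$-equivariant with base of dimension $\kappa(X)$ (resp.\ $\kappa(\omega_{X^v})$), the positive-eigenvalue nef classes must be vertical for it, and the count above runs only in the $n-\kappa$ fibre directions, yielding $r\le n-1-\max\{0,\kappa(X),\kappa(\omega_{X^v})\}$.

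The group-theoretic reductions above (Tits alternative, Lie--Kolchin, polycyclicity of solvable subgroups of $\GL_N(\bZ)$) are characteristic-free and routine; the real obstacle is everything from the rank bound onward in arbitrary characteristic. One has to set up dynamical degrees of automorphisms as spectral radii on $\NS_{\bR}(X)$ that are genuinely computed by intersection numbers of nef pullbacks, and to prove the Khovanskii--Teissier and Hodge-index inequalities for nef classes over an arbitrary algebraically closed field with no Hodge theory available --- this is the substance supplied by \cite{Truong18,Dang17,Hu19}. The Kodaira-dimension refinement moreover needs enough of the Iitaka-fibration machinery in characteristic $p$ (functoriality of the normalisation, control of pluricanonical maps under inseparable phenomena) to make the fibre-direction count legitimate.
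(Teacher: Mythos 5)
The paper does not prove this statement at all: Theorem~\ref{thm:Zhang-A} is quoted as known background, with the complex case attributed to \cite{Zhang09-Invent,CWZ14} and the arbitrary-characteristic case to \cite{Hu19}, so there is no in-paper proof to compare yours against. Judged on its own terms, your proposal is a faithful reconstruction of the strategy those references use: reduce to the linear action on $\NS_{\bR}(X)$ preserving the lattice and the cone $\Nef(X)$, use integrality plus Kronecker to identify $N(G)$ with the quasi-unipotent part, apply the Tits alternative and Mal'cev (polycyclicity of solvable subgroups of $\GL_N(\bZ)$) to get a finitely generated solvable $G^*$, embed $G^*/\psi(N(G))$ into $\bR^N$ via the logarithms of the diagonal characters to get free abelianness, and then bound the rank by intersection theory of common nef eigenvectors. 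The group-theoretic skeleton is sound (including the point that $N(G')$ becomes normal because it is the common kernel of the character homomorphisms).

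That said, you should be explicit that the two load-bearing steps are only gestured at, not proved. First, the existence of a \emph{common} nef eigenvector for a solvable cone-preserving group is not Lie--Kolchin but the cone version of it (Keum--Oguiso--Zhang), and the vanishing $v_{i_1}\cdots v_{i_n}=0$ only yields $r\le n-1$ after the higher-dimensional Hodge-index and Khovanskii--Teissier inequalities are available for nef classes over an arbitrary algebraically closed field; this is precisely the content supplied by \cite{Truong18,Dang17,Hu19} and is the entire difficulty in positive characteristic, as you acknowledge. Second, your treatment of the refinement by $\max\{0,\kappa(X),\kappa(\omega_{X^v})\}$ is the weakest part: the heuristic that ``positive-eigenvalue nef classes must be vertical for the Iitaka fibration'' is not how the argument is actually run (one uses $G$-invariance of the pluricanonical systems to descend the action to the Iitaka base and kills dynamical directions there), and the appearance of $\kappa(\omega_{X^v})$ is not cosmetic --- one passes to the normalization precisely to have a well-behaved dualizing sheaf on a possibly non-normal $X$, and the two invariants $\kappa(X)$ and $\kappa(\omega_{X^v})$ can genuinely differ in positive characteristic (the paper notes they agree only in characteristic $0$). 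As a blind sketch your proposal correctly locates where the real work lies, but it would not stand as a proof without importing those results wholesale.
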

For the definitions of $\kappa(X)$ and $\kappa(\omega_{X^v})$, we refer to \cite[Appendiexs A and B]{Patakfalvi18} or \cite[\S2.1.1]{Hu19}.
When $\mathrm{char}~k=0$, we know that $\kappa(X)=\kappa(\omega_{X^v})=\kappa(\widetilde{X})$, where  $\widetilde{X}\to X$ is a projective resolution.

If $X$ admits a group $G$ such that the rank of $G'/N(G')$ is maximal, i.e., equal to $n-1$, we will say that $X$ is a variety with maximal dynamical rank (MDR for short). Clearly, for such a variety,  we have $\kappa(X)\le n-1-\mathrm{rank}(G'/N(G'))=0$ by \cref{thm:Zhang-A}.
We refer to \cite{DS04,Zhang16} for more properties of these varieties over complex number field $\bC$.
The problem of classifying variety with MDR is still open when either char $k>0$ or $X$ is rational connected.
In \cite{HL19}, the authors tried to characterize the complex projective varieties of sub-maximal dynamical rank, i.e., equal to $n-2$.

In the note, we will focus our study on the group $N(G')$ in the last statement.
In order to simplify the notation, we consider groups $G$ such that every element of $G$ is of zero entropy.

Now recall that for a group $G$ and a non-negative integer $l$, the {\it l-th derived series} $G^{(l)}$ is defined inductively by
\begin{equation*}
                                           G^{(0)}:=G~~~\text{  and  }~~~ G^{(i+1)}:=[G^{(i)}, G^{(i)}].
\end{equation*}
By definition, $G^{(l)}=\{1\}$ for some non-negative integer $l$ exactly when $G$ is solvable.
 We call the minimum of such $l$ the {\it derived length} of $G$ (when $G$ is solvable) and denote it by
 \begin{equation*}
 \ell(G).
 \end{equation*}
A group $H$ is said to be {\it unipotent} if there is an injective homomorphism $\rho: H\to\mathrm{GL}(N,\bR)$  such that for every $h\in H$, the image $\rho(h)$ is upper triangular with all entries on the diagonal being 1.
Note that unipotent groups are solvable.
It is known that if a group $H$ is isomorphic to a subgroup of $\mathrm{GL}(N, \bR)$ whose elements have only eigenvalue 1, then $H$ is unipotent, see \cite[\S17.5]{Humphreys75}.
Below is our first result which slightly extends  \cite[Theorem 1.2]{DOZ18}.
 \begin{theorem}\label{MainThm}
Let $X$ be a projective variety of dimension $n\ge1$ and $G$ a subgroup of $\Aut(X)$ such that every element of $G$ is of zero entropy. Then
   \begin{enumerate}
    \item G admits a finite-index subgroup $G'$ such that, for any $1\le k\le n-1$, the natural map $G'/G_0'\longrightarrow G'|_{N_{\bR}^k(X)}$ is an isomorphism with image a unipotent subgroup of $\mathrm{GL}(N_{\bR}^k(X))$.
    \item For every finite-index subgroup $G'$ of $G$ such that $G'/G_0'$ is a unipotent group, the derived length of $G'/G_0'$ does not depend on the choice of $G'$  and is at most equal to $n-1$.
    \end{enumerate}
   \end{theorem}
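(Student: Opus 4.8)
The plan is to follow Dinh--Oguiso--Zhang \cite{DOZ18}, replacing the transcendental inputs they use for compact K\"ahler manifolds (Hodge theory on $H^*(X,\bR)$, the hard Lefschetz theorem, the Hodge--Riemann relations) by their algebraic counterparts available over any algebraically closed field: the numerical groups $N_{\bR}^k(X)$ with their nondegenerate intersection pairings $N_{\bR}^k(X)\times N_{\bR}^{n-k}(X)\to\bR$, the Hodge index theorem for big and nef classes, the Khovanskii--Teissier inequalities for nef classes (whence the log-concavity of dynamical degrees, valid in arbitrary characteristic by \cite{Dang17,Truong18}), and the Fujiki--Lieberman type theorem. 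The first step is to note that every $g\in G$ acts \emph{quasi-unipotently} on each $N_{\bR}^k(X)$: setting $d_k(g):=\rho(g^*|_{N_{\bR}^k(X)})$, one has $d_0(g)=d_n(g)=1$ since $g$ is an automorphism, $d_1(g)=1$ by hypothesis, and $d_k(g)^2\ge d_{k-1}(g)d_{k+1}(g)$ over any field, so $k\mapsto\log d_k(g)$ is concave on $\{0,\dots,n\}$ and, vanishing at $0,1,n$, vanishes identically; hence $g^*|_{N_{\bR}^k(X)}$ has spectral radius $1$. As $N^k(X)$ is a finitely generated free abelian group preserved by $g^*$, the characteristic polynomial of $g^*|_{N_{\bR}^k(X)}$ lies in $\bZ[t]$, so its roots are nonzero Galois-stable algebraic integers of absolute value $\le 1$, hence roots of unity by Kronecker's theorem; thus $g^*|_{N_{\bR}^k(X)}$ is quasi-unipotent.

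For part (1): the image $\overline G$ of $G$ in $\prod_{k=1}^{n-1}\GL(N_{\bR}^k(X))$ consists of integral quasi-unipotent matrices whose projection to $\GL(N_{\bR}^1(X))$ preserves the nef cone --- a salient closed cone with nonempty interior, on which the Hodge index form has Lorentzian signature. As in \cite{DOZ18} (over an arbitrary field one argues with this cone and form, there being no loxodromic elements), such a group admits a finite-index subgroup all of whose elements are unipotent; pulling back yields a finite-index $G'\le G$ whose image in each $\GL(N_{\bR}^k(X))$ is a unipotent group, and then $G'/G_0'\to G'|_{N_{\bR}^k(X)}$ is surjective by definition. For injectivity one must show $\ker\!\big(G'\to\GL(N_{\bR}^k(X))\big)=G_0'$. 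When $k=1$ this follows from the Fujiki--Lieberman type theorem --- the kernel of $\Aut(X)\to\GL(N_{\bR}^1(X))$ is an extension of a finite group by $\Aut_0(X)$ --- together with the fact that the unipotent group $G'/G_0'$ is torsion-free. When $2\le k\le n-1$, if $g\in G'$ acts trivially on $N_{\bR}^k(X)$ then $(g^*\alpha)^k=\alpha^k$ in $N^k(X)$ for every ample $\alpha$; with $\beta:=g^*\alpha$ (again ample), factor $\beta^k-\alpha^k=(\beta-\alpha)\big(\beta^{k-1}+\dots+\alpha^{k-1}\big)$ and pair with $\delta\cdot H^{\,n-k-1}$ for arbitrary $\delta\in N^1(X)$ and ample $H$; the Hodge index theorem applied to the big and nef class $\big(\beta^{k-1}+\dots+\alpha^{k-1}\big)\cdot H^{\,n-k-1}\in N_{\bR}^{n-2}(X)$ forces $\beta=\alpha$, so $g$ is trivial on $N_{\bR}^1(X)$ and hence lies in $G_0'$. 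This gives part (1).

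For part (2): if $G'$ and $G''$ are two finite-index subgroups of $G$ with $G'/G_0'$ and $G''/G_0''$ unipotent, then by part (1) both embed in $\GL(N_{\bR}^1(X))$ with commensurable images; since a finite-index subgroup of a finitely generated nilpotent group is Zariski-dense in the common Zariski closure (a connected unipotent $\bR$-group) and a Zariski-dense subgroup of a connected algebraic group shares its derived length, $\ell(G'/G_0')$ is independent of the choice of $G'$. For the bound $\ell(G'/G_0')\le n-1$ I would, following \cite{DOZ18}, fix a nef class $\theta$ fixed by $G'$ (a Perron--Frobenius eigenclass for the unipotent action on the nef cone) and construct, from $\theta$ and the intersection product, a $G'$-invariant filtration of $\bigoplus_k N_{\bR}^k(X)$ (or of a single $N_{\bR}^k(X)$) whose length is bounded in terms of $n$ and along which each successive derived subgroup $(G'/G_0')^{(i)}$ shifts strictly, so that $(G'/G_0')^{(n-1)}=1$. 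Producing this filtration --- and checking that the algebraic Hodge index theorem and the Khovanskii--Teissier inequalities genuinely suffice where \cite{DOZ18} invoked hard Lefschetz and the Hodge--Riemann relations --- is the main obstacle; the remaining steps are formal.
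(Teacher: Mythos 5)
Your overall strategy coincides with the paper's: both adapt Dinh--Oguiso--Zhang by replacing $H^{k,k}(X,\bR)$ with $N^k_{\bR}(X)$, the K\"ahler cone with the ample/nef cone, and the transcendental inputs with the log-concavity of dynamical degrees and the higher-dimensional Hodge index theorem. Your first step (quasi-unipotency of $g^*|_{N^k_{\bR}(X)}$ from $d_0=d_1=1$, log-concavity, integrality of the characteristic polynomial, and Kronecker's theorem, with a uniform exponent coming from the bound $\varphi(d)\le\operatorname{rank}N^k(X)$ and then \cite[Lemma 2.1]{DOZ18}) is exactly the paper's proof of \cref{MainLem1}(2). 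Your identification of the kernel with $G_0'$ (Fujiki--Lieberman for $k=1$, a Hodge-index factorization for $k\ge2$) and your argument for the independence of the derived length are consistent with what the paper does via \cref{MainThm2} and \cite[Lemmas 2.7 and 2.8]{DOZ18}; note only that you should not assume $G'/G_0'$ is finitely generated --- an arbitrary unipotent subgroup of $\mathrm{GL}(N,\bR)$ need not be, and connectedness of its Zariski closure already gives $\ell(H')=\ell(H)$ as in \cref{Lem1}.

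The genuine gap is the bound $\ell(G'/G_0')\le n-1$, which you yourself flag as ``the main obstacle.'' This is the actual content of part (2), and ``fix a nef class $\theta$ and construct an invariant filtration of bounded length'' is a statement of intent, not an argument: what is required is the inductive machinery of \cite[Propositions 2.6 and 3.1]{DOZ18}, which produces nonzero nef classes $\theta_1,\dots,\theta_j$ fixed by successively deeper members of the derived series in such a way that the products $\theta_1\cdots\theta_i$ stay nonzero in $N^i_{\bR}(X)$, and whose norm estimates show that each nontrivial derived step forces the product to drop one more codimension; since $N^n_{\bR}(X)\cong\bR$ the process terminates after at most $n-1$ steps. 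The paper's \cref{derived length} asserts precisely that this argument carries over verbatim once the Hodge--Riemann and hard Lefschetz inputs are replaced by the higher-dimensional Hodge index theorem for $\bR$-Cartier divisors \cite[Proposition 2.9]{Hu19}, and once the equivariant resolution used in \cite{DOZ18} is replaced by the normalization of the graph together with the birational invariance of dynamical degrees \cite[Lemma 2.8]{Hu19}, as in \cref{birational}. Until you carry out that verification --- which is where the real work lies --- part (2) of your proposal is not established.
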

 \begin{rmk}\label{rmk-proof}
(1) After replacing $H^{k,k}(X,\bF)$, $\mathcal{K}_i(X)$, K\"ahler cone,etc  as in \cite{DOZ18} by $N_{\bF}^k(X)$, $\mathrm{Nef}^i(X)$, ample cone,etc respectively, this proof in \cref{MainThm} uses the same argument in \cite{DOZ18}.
We will give a sketch of the proof in \cref{Proofs}.
 The main techniques used in this section are a Fujiki-Lieberman type  theorem (cf. \cref{MainThm2} below) and a higher-dimensional Hodge-index theorem for $\bR$-Cartier divisors (cf. \cite[Proposition 2.9]{Hu19}).

(2) In \cite{DOZ18}, Dinh, Oguiso and Zhang further established that
\begin{equation*}
\ell(G'/G_0')\le  n-\max\{\kappa(X),1\}
\end{equation*}
for a compact K$\mathrm{\ddot{a}}$hler manifold $X$.
It is hard to generalize this result in arbitrary characteristic.
Indeed, their argument essentially depends on the Deligne-Nakamura-Ueno theorem (cf. \cite[Corollary 2.4]{NZ09}), which is not known in positive characteristic, as far as we know.
\end{rmk}
Now let $X$ be a complex normal projective variety and $B$ a Cartier divisor on $X$.
Denote by $\Aut_{[B]}(X):=\{ g\in\Aut(X)| g^*[B]=[B]\}$.
When $X$ is smooth and $B$ is ample, Fujiki and Lieberman proved in \cite[Theorem 4.8]{Fujiki78} and \cite[Proposition 2.2]{Lieberman78} that $[\Aut_{[B]}(X):\Aut_0(X)]<\infty$.
Generally, let $G$ be a subgroup of $\Aut(X)$, such that for any $g\in G, g^*[B_g]=[B_g]$ for some big Cartier divisor $B_g$.
Dinh, Hu and Zhang proved in  \cite[Theorem 2.1]{DHZ15} that $G$ is virtually in $\Aut_0(X)$, i.e., $[G: G\cap \Aut_0(X)]<\infty$.
After replacing $g^*[B_g]=[B_g]$ by $g^*B_g\equiv_w B_g$ for some big Weil $\bR$-divisor $B_g$, Meng and Zhang showed in \cite[Theorem 1.2]{MZ18-Adv} that $G$ is virtually in $\Aut_0(X)$, where "$\equiv_w$" is the {\it weak numerical equivalence} (cf. \cite[Definition 2.2]{MZ18-Adv}).
Using a Hilbert scheme argument, Meng and Zhang proved in  \cite[Remark 2.6]{MZ18} that for an ample divisor $H$ on a projective variety $X$ in arbitrary characteristic, $[\Aut_{[H]}(X):\Aut_0(X)]<\infty$.

In the note, we  can further generalize \cite[Theorem 1.2]{MZ18-Adv} to the following.
\begin{theorem}
\label{MainThm2}
Let $X$ be a projective variety.
Let $G$ be a subgroup of $\Aut(X)$, such that for any $g\in G$, $g^*[B_g]=[B_g]$ for some big $\bR$-divisor $B_g$ in $N_{\bR}^1(X)$.
Then $G$ is virtually in $\Aut_0(X)$, i.e., $[G: G\cap \Aut_0(X)]<\infty$.
\end{theorem}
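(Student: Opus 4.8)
The plan is to deduce \cref{MainThm2} from the ample case, i.e.\ from the fact that $[\Aut_{[H]}(X):\Aut_0(X)]<\infty$ for an ample divisor $H$ on a projective variety over an algebraically closed field of arbitrary characteristic, which is \cite[Remark 2.6]{MZ18}. The bridge between the two hypotheses is a Perron--Frobenius-type observation: an automorphism fixing a big class must act with \emph{finite order} on the N\'eron--Severi lattice. Thus the work splits into (i) proving this finiteness and (ii) passing to a finite-index subgroup of $G$ that fixes an ample class, after which (ii) plus the ample case finishes the proof.

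For (i), recall that for $g\in G$ the pullback $g^*$ permutes classes of effective divisors, hence preserves the pseudo-effective cone $\Psef^1(X)\subset N_{\bR}^1(X)$, which is closed, convex and pointed and whose non-empty interior is the big cone (these positivity properties hold over an arbitrary base field, see \cite{Hu19} and the higher-dimensional Hodge index theorem cited there). By hypothesis $g^*$ fixes the interior point $[B_g]$, and so does $(g^{-1})^*=(g^*)^{-1}$, so $(g^*)^n$ preserves $\Psef^1(X)$ for every $n\in\bZ$. For $w\in N_{\bR}^1(X)$ pick $\eps>0$ with $[B_g]\pm\eps w\in\Psef^1(X)$; applying $(g^*)^n$ yields $\eps(g^*)^n w\in\bigl(\Psef^1(X)-[B_g]\bigr)\cap\bigl([B_g]-\Psef^1(X)\bigr)$ for all $n\in\bZ$, and the latter set is bounded because $\Psef^1(X)$ is pointed. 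Hence every $(g^*)$-orbit in $N_{\bR}^1(X)$ is bounded, which forces $g^*|_{N_{\bR}^1(X)}$ to be semisimple with all eigenvalues on the unit circle. Restricting to the lattice $N^1(X):=\NS(X)/\NS(X)_{\tor}$, the characteristic polynomial of $g^*$ is monic with integer coefficients and all roots of modulus $1$, hence (Kronecker's theorem) its roots are roots of unity; being semisimple, $g^*|_{N^1(X)}$ has finite order.

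Therefore the image of the representation $\phi\colon G\to\GL(N^1(X))\cong\GL(r,\bZ)$, $r=\rank\NS(X)$, is a torsion subgroup of $\GL(r,\bZ)$, hence finite by Minkowski's lemma (the reduction $\GL(r,\bZ)\to\GL(r,\bZ/3)$ is injective on torsion). It follows that the subgroup $G'\subseteq G$ of elements acting trivially on all of $\NS(X)$ still has finite index in $G$, since the image of $G$ in $\Aut(\NS(X))$ is finite, being controlled by the finite groups $\phi(G)$, $\Aut(\NS(X)_{\tor})$ and $\Hom(N^1(X),\NS(X)_{\tor})$. For (ii) it now suffices to fix any ample divisor $H$ on $X$: since $G'$ acts trivially on $\NS(X)$ we have $G'\subseteq\Aut_{[H]}(X)$, so $[G':G'\cap\Aut_0(X)]\le[\Aut_{[H]}(X):\Aut_0(X)]<\infty$, and combining this with $[G:G']<\infty$ and $G'\cap\Aut_0(X)\subseteq G\cap\Aut_0(X)$ gives $[G:G\cap\Aut_0(X)]\le[G:G']\cdot[G':G'\cap\Aut_0(X)]<\infty$, which is the assertion.

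The genuinely hard ingredient — and what would be the main obstacle, were it not already available in this generality — is the ample case $[\Aut_{[H]}(X):\Aut_0(X)]<\infty$ in positive characteristic, which \cite{MZ18} obtains by a Hilbert scheme argument. Everything else is the elementary convex-geometry-plus-integer-matrix reduction above; the only point that requires a moment of care is checking that the positivity of $\Psef^1(X)$ used in step (i) (pointedness, and big $=$ interior) really does hold over an arbitrary algebraically closed field, for which one appeals to \cite{Hu19}.
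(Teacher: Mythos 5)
Your proposal is correct and follows the same overall skeleton as the paper's proof: show that each $g^*$ acts on the N\'eron--Severi lattice semisimply with all eigenvalues roots of unity, deduce that the image of $G$ in $\GL(\NS(X))$ is finite, and then reduce to the ample case $[\Aut_{[H]}(X):\Aut_0(X)]<\infty$ of \cite[Remark 2.6]{MZ18}. The two places where you diverge are both substitutions of one standard tool for another. First, for the step ``fixing a big (interior) class forces bounded orbits, hence a unimodular semisimple action,'' the paper simply invokes \cite[Proposition 2.9]{MZ18-Adv} (quoted as \cref{Meng-Zhang-Lem}), applying it once to the pseudo-effective cone to get boundedness of $\norm{(g^*)^i}$ and once to the nef cone to produce a $g^*$-fixed ample class; your self-contained convexity argument (translating the cone by the interior point $[B_g]$ and using that a salient cone contains no line) reproves exactly the content of that lemma and avoids having to produce the fixed ample class $H_g$ altogether. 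Second, to pass from ``every element of the image in $\GL(r,\bZ)$ has finite order'' to finiteness of the image, the paper bounds the order uniformly via the degrees of the relevant cyclotomic polynomials and then applies Burnside's theorem, whereas you apply Minkowski's lemma (injectivity of reduction mod $3$ on torsion), which is slightly cleaner since it needs no uniform exponent. Your extra care with $\NS(X)_{\tor}$ is harmless and in fact matches what the paper does implicitly by working with $L=N^1_{\bZ}(X)/(\textup{torsion})$. The one point you rightly flag --- that $\Psef^1(X)$ is a closed, salient, full-dimensional cone with the big classes as its interior over an arbitrary algebraically closed field --- is also what the paper's appeal to \cite{MZ18-Adv} tacitly requires, so nothing is lost there.
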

{\bf Notation and Terminology.}
We recall the definitions of $N_{\bR}^k(X)$ and $\mathrm{Nef}^k(X)$ in \cite{Fulton98}.
$N_{\bZ}^k(X)$ (or $N_k(X)_{\bZ}$) is the group of codimension (or dimension) $k$ algebraic cycles on $X$ modulo numerical equivalence.
We will use the vector spaces $N_{\bR}^k(X):=N_{\bZ}^k(X)\otimes_{\bZ}\bR, N_{\bC}^k(X):=N_{\bZ}^k(X)\otimes_{\bZ}\bC$ and $N_k(X)_{\bR}:=N_k(X)_{\bZ}\otimes_{\bZ}\bR$.
A $k$-cycle $Z$ is {\it effective}, if all of its defining coefficients are non-negative.
The corresponding numerical class $[Z]\in N_k(X)_{\bR}$ is called an {\it effective numerical class}.
We denote by $\overline{\mathrm{Eff}}_k(X)$ the closure of the convex cone generated by all effective numerical classes in $N_k(X)_{\bR}$.
It is called the  {\it pseudo-effective cone}  of $N_k(X)_{\bR}$.
The cone dual to $\overline{\mathrm{Eff}}_k(X)$ in $N_{\bR}^k(X)$ is called the {\it nef cone} $\mathrm{Nef}^k(X)$, which is a salient closed convex  cone of {\it full dimension} (i.e., it generates $N_{\bR}^k(X)$ as a vector space).
An element of $\mathrm{Nef}^k(X)$ is called a {\it nef class}.
In particular, $\mathrm{Nef}^1(X)$ is the usual nef cone $\mathrm{Nef}(X)$ consisting of all nef $\bR$-Cartier divisor classes.

Now let $X$ and $G$ be as in \cref{MainThm} and $H$  a finite-index solvable subgroup of $G$.
 We quote $\ell_{ess}(G,X)$ and $\ell_{\min}(H)$ in \cite[\S1]{DOZ18}.
The {\it essential length} of the action of $G$ on $X$ is defined by
 \begin{equation*}
                                  \ell_{ess}(G,X):=\ell(G'/G_0').
 \end{equation*}
Here, $G'$ is any finite-index subgroup of $G$ such that $G'/G_0'$ is a unipotent group.
This definition does not depend on the choice of $G'$, see \cite[Lemma 2.7]{DOZ18} or \cref{Lem1} below.
We also define
\begin{equation*}
                                   \ell_{\min}(H):=\min_{H'} \ell(H').
\end{equation*}
Here, $H'$ runs through all finite-index solvable subgroups of $H$, see also \cref{Lem1}.

When a group $G$ acts on a space $V$, we denote by $G|_V$ the image of the canonical homomorphism $G\to\Aut(V)$.
For instance, $\Aut(X)|_{N_{\bR}^k(X)}$ is the image of the canonical action of the automorphism group $\Aut(X)$ on $N_{\bR}^k(X)$.
For a normal subgroup $G_1\lhd G$, we set $(G/G_1)|_V=(G|_V)/(G_1|_V)$.
If $L$ and $M$ are two numerical classes, we denote by $L\cdot M$ or  $LM$.
We also identify $N_{\bR}^0(X)$ and $N_{\bR}^n(X)$ with $\bR$ in the canonical way.
So classes in these groups are identified to real numbers.
For a linear map $f: V\to V$, we denote by $||f||$ the {\it norm} of $f$.
\section{Proof of \cref{MainThm,MainThm2}}\label{Proofs}
We first quote two lemmas in \cite{DOZ18}, which will be used in the proof of \cref{MainThm}.
\begin{lemma}
\text{(cf. \cite[Lemma 2.7]{DOZ18})}
\label{Lem1}
Let $H$ be a unipotent group and let $H'$ be a finite-index subgroup of $H$. Then we have $\ell(H')=\ell(H)$.
In particular, we have $\ell_{\min}(H)=\ell(H)$.
\end{lemma}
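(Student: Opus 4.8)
The plan is to pass from the abstract group $H$ to its Zariski closure, where the derived series is rigid. Since $H$ is unipotent, it admits a faithful finite-dimensional real representation $\rho\colon H\hookrightarrow\GL(N,\bR)$ whose image consists of unipotent matrices; view $\rho(H)\subseteq\GL(N,\bC)$ and let $G$ be the Zariski closure of $\rho(H)$ in $\GL(N,\bC)$. Because ``being unipotent'' is a Zariski-closed condition, every element of $G$ is unipotent, so $G$ is a unipotent linear algebraic group; and, since $\ch\bC=0$, the exponential map shows $G$ is isomorphic as a variety to an affine space, hence $G$ is connected (\cite[\S17]{Humphreys75}). In particular $G$ is solvable, $\rho(H)$ is dense in $G$, and $G$ has no proper closed subgroup of finite index.

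The first step is the behaviour of commutator subgroups under Zariski closure: if $K$ is a dense subgroup of a connected algebraic group $L$, then $\overline{[K,K]}=[L,L]$. Indeed, the commutator morphism $c\colon L\times L\to L$ carries the dense set $K\times K$ into $[K,K]$, so by continuity $c(L\times L)=c(\overline{K\times K})\subseteq\overline{[K,K]}$; as $\overline{[K,K]}$ is a closed subgroup containing every commutator of $L$ it contains $[L,L]$, and the reverse inclusion holds since $[L,L]$ is closed (\cite[\S17]{Humphreys75}). Iterating along the derived series — noting that each $G^{(i)}$ is again connected unipotent, so $G^{(i+1)}$ is closed and connected — gives $\overline{\rho(H)^{(i)}}=G^{(i)}$ for every $i\ge0$. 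Hence $\rho(H)^{(i)}=\{1\}$ if and only if $G^{(i)}=\{1\}$, so $\ell(H)=\ell(G)$.

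Next, given a finite-index subgroup $H'\le H$, I would write $H=\bigcup_{j=1}^{m}h_jH'$; applying $\rho$, taking Zariski closures, and using that left translation by $\rho(h_j)$ is a homeomorphism shows $\overline{\rho(H')}$ has index at most $m$ in $G$, hence $\overline{\rho(H')}=G$ by connectedness of $G$. The previous step applied to $H'$ then yields $\ell(H')=\ell(G)=\ell(H)$. Finally, every subgroup of the unipotent group $H$ is again unipotent, hence solvable, so $\ell_{\min}(H)$ is really the minimum of $\ell(H')$ over \emph{all} finite-index subgroups $H'\le H$; since each of these equals $\ell(H)$ and $H$ is a finite-index solvable subgroup of itself, $\ell_{\min}(H)=\ell(H)$.

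The step requiring the most care is the algebraic-group input above: the closedness of the commutator subgroup of a connected algebraic group, the density identity $\overline{[K,K]}=[L,L]$, and the characteristic-zero connectedness of unipotent groups. This is exactly where one uses that the \emph{coefficient} field $\bR$ has characteristic zero, independently of the possibly positive characteristic of the field $k$ over which $X$ is defined. A more hands-on alternative is to observe that $H$, being a subgroup of the nilpotent group of upper unipotent $N\times N$ matrices over $\bR$, is torsion-free nilpotent, and to invoke the uniqueness of its Mal'cev completion together with the fact that finite-index subgroups have the same completion; but the Zariski-closure argument is shorter and needs no finite-generation hypothesis on $H$.
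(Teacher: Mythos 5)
The paper does not actually prove this lemma---it is quoted from \cite[Lemma 2.7]{DOZ18}---and your Zariski-closure argument is correct and is essentially the proof given there: the closure of a unipotent subgroup of $\GL(N,\bR)$ is a connected unipotent algebraic group with the same derived length (via $\overline{K^{(i)}}=L^{(i)}$ for $K$ dense in a connected $L$), and a finite-index subgroup has the same closure by connectedness. All the algebraic-group inputs you invoke (closedness and connectedness of $[L,L]$ for connected $L$, connectedness of unipotent groups in characteristic zero, $\overline{A\times B}=\overline{A}\times\overline{B}$ for the Zariski topology, and that a closed finite-index subgroup of a connected group is the whole group) are standard and correctly applied, so I see no gap.
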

\begin{lemma}\label{DOZ18Lem2.1}
\text{(cf. \cite[Lemma 2.1]{DOZ18})}
Let $V$ be a real vector space of finite dimension.
Let $\Gamma$ be a subgroup of $\mathrm{GL}(V)$.
Assume there is an integer $N\ge1$ such that $g^N$ is unipotent for every $g\in \Gamma$, i.e., their eigenvalues are 1.
Then there is a finite-index subgroup $\Gamma'$  of $\Gamma$ which is a unipotent subgroup of $\mathrm{GL}(V)$.
\end{lemma}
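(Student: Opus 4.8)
The plan is to pass to the Zariski closure of $\Gamma$ and to extract its identity component, showing that this component is a unipotent algebraic group. First I would regard $V$ as a complex vector space $V_{\bbC}=V\otimes_{\bbR}\bbC$, so that $\Gamma\subseteq\GL(V)\subseteq\GL(V_{\bbC})$; eigenvalues and the property of being unipotent are unaffected by this extension of scalars. The key preliminary observation is that the condition ``$g^{N}$ is unipotent'' is Zariski closed on $\GL(V_{\bbC})$: writing $m=\dim_{\bbC}V_{\bbC}$, it is equivalent to the vanishing of every entry of $(g^{N}-\id)^{m}$, which is a polynomial condition in the entries of $g$ (only positive powers of $g$ occur, so no denominators appear). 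Hence, letting $H:=\overline{\Gamma}$ denote the Zariski closure of $\Gamma$ in $\GL(V_{\bbC})$, a linear algebraic group over $\bbC$, every element $h\in H$ still satisfies that $h^{N}$ is unipotent, since $\Gamma$ is dense in $H$ and lies in this closed set.

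Next I would reduce to the identity component $H^{\circ}$. As $H$ has only finitely many connected components, the group $\Gamma':=\Gamma\cap H^{\circ}$ satisfies $[\Gamma:\Gamma']\le[H:H^{\circ}]<\infty$, and its elements are genuine (real) elements of $\Gamma$. It therefore suffices to prove that $H^{\circ}$ is a unipotent algebraic group: once this is known, every element of $\Gamma'$ has all eigenvalues equal to $1$, so $\Gamma'$ is unipotent by the criterion of \cite[\S17.5]{Humphreys75}. The crux is an observation about a maximal torus $T$ of $H^{\circ}$. Each $t\in T$ is semisimple, while $t^{N}$ is unipotent by the previous paragraph; since a power of a semisimple element is again semisimple, $t^{N}$ is simultaneously semisimple and unipotent, whence $t^{N}=\id$. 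Thus every element of $T$ has order dividing $N$, so $T$ is a finite group; being also connected, it must be trivial, $T=\{\id\}$.

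Finally I would invoke the structure theory of linear algebraic groups over the algebraically closed field $\bbC$ of characteristic zero. The image of a maximal torus of $H^{\circ}$ under the quotient $\pi\colon H^{\circ}\to H^{\circ}/R_{u}(H^{\circ})=:L$ is a maximal torus of the connected reductive group $L$, hence trivial; but a nontrivial connected reductive group has positive rank, so $L$ is trivial and $H^{\circ}=R_{u}(H^{\circ})$ is unipotent. This produces the required $H^{\circ}$ and hence the finite-index unipotent subgroup $\Gamma'\subseteq\Gamma$, completing the argument. I expect the main obstacle to be largely expository rather than conceptual: keeping the real/complex bookkeeping honest and citing the correct structural facts (closedness of the unipotency locus, the Jordan decomposition inside $H$, and the behaviour of maximal tori under the quotient by the unipotent radical). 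Characteristic zero enters only to guarantee that the Jordan decomposition is available and compatible with the algebraic subgroup $H$, which is automatic since $V$ is a real vector space. An alternative, more elementary route would avoid algebraic groups altogether by noting that the characteristic polynomials of elements of $\Gamma$ take only finitely many values---their roots being $N$-th roots of unity and their coefficients bounded algebraic integers---and then applying a Burnside--Schur type finiteness theorem; but the Zariski-closure argument above seems the most transparent.
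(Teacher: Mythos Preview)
The paper does not give its own proof of this lemma; it simply quotes it from \cite[Lemma~2.1]{DOZ18}. Your argument is correct and is essentially the standard algebraic-group proof: pass to the Zariski closure $H=\overline{\Gamma}\subseteq\GL(V_{\bbC})$, observe that the closed condition $(g^{N}-\id)^{m}=0$ persists on $H$, and show that a maximal torus of $H^{\circ}$ is annihilated by $N$-th powers, hence finite, hence trivial, forcing $H^{\circ}$ to be unipotent. One minor simplification: once the maximal torus of $H^{\circ}$ is trivial you can skip the reductive quotient and conclude directly, since every semisimple element of the connected group $H^{\circ}$ lies in some maximal torus and therefore equals $\id$, so by Jordan decomposition every element of $H^{\circ}$ is unipotent. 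Your parenthetical about the alternative Burnside--Schur route is slightly misstated---the characteristic-polynomial coefficients are symmetric functions of $N$-th roots of unity and hence take only finitely many \emph{complex} values, with integrality playing no role---but the Zariski-closure argument you actually carry out is in any case the cleaner one.
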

To prove \cref{MainThm2}, we needs a crucial lemma in \cite{MZ18-Adv}.
\begin{lemma}
\text{(cf. \cite[Proposition 2.9]{MZ18-Adv})}
\label{Meng-Zhang-Lem}
Let $f: V\to V$ be an invertible linear map of a positive dimensional real normed vector space $V$ such that $f^{\pm1}(C)=(C)$ for a closed convex cone $C\subseteq V$ which spans $V$ and contains no line.
Let $q$ be a positive number.
Then (1) and (2) below are equivalent.
\begin{enumerate}
\item[(1)] $f(x)=qx$ for some $x\in C^{\circ}$ ( the interior part of $C$).
\item[(2)] There exists a constant $N>0$, such that $\frac{||f^i||}{q^i}<N$ for any $i\in\bZ$.
\end{enumerate}
If (1) or (2) above is true, then $f$ is a diagonalizable linear map with all eigenvalues of modulus $q$.
\end{lemma}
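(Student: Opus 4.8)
The plan is to reduce to the normalized case $q=1$ and then prove the two implications separately, extracting the spectral conclusion from hypothesis (2) along the way. First I would replace $f$ by $g:=q^{-1}f$; since $C$ is a cone and $q>0$ we still have $g^{\pm1}(C)=C$, while $\|g^i\|=\|f^i\|/q^i$, so condition (2) becomes $\sup_{i\in\bZ}\|g^i\|<\infty$, condition (1) becomes $g(x)=x$ for some $x\in C^{\circ}$, and the final assertion becomes that $g$ is diagonalizable over $\bbC$ with all eigenvalues of modulus $1$. I record two structural facts about $C$ that are used repeatedly: since $C$ spans $V$ it is full-dimensional, hence $C^{\circ}\neq\emptyset$; and since $C$ contains no line it is salient, which is what will make order intervals bounded below.

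The spectral statement follows from (2) alone by a Jordan-form argument, independent of the cone. If $g$ had an eigenvalue $\lambda$ with $|\lambda|\neq1$, then $\|g^i\|\geq|\lambda|^{i}$ or $\|g^{-i}\|\geq|\lambda|^{-i}$ would blow up as $i\to+\infty$ or $i\to-\infty$; and a Jordan block of size $\geq2$ attached to a unit-modulus eigenvalue would force $\|g^i\|$ to grow linearly in $|i|$. Both contradict the uniform bound, so $g$ is diagonalizable with unit-modulus eigenvalues. In particular the Cesàro averages $A_m:=\frac{1}{2m+1}\sum_{i=-m}^{m}g^i$ act on each eigenspace $V_\lambda$ by $\frac{1}{2m+1}\sum_{i=-m}^m\lambda^i$, which is $1$ for $\lambda=1$ and tends to $0$ otherwise, so $A_m$ converges to the real projection $P$ onto $\Ker(g-\id)$ along the sum of the remaining eigenspaces.

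To prove (2)$\Rightarrow$(1) I would promote a fixed vector to an interior one, and this is the step I expect to be the main obstacle: the averaging limit naturally lands in the closed cone $C$, and the difficulty is to keep it in the interior $C^{\circ}$. The key is that the uniform bound $\|g^{-i}\|\leq N$ makes the interiority estimate uniform in $i$. Fix $x_0\in C^{\circ}$ and $\epsilon>0$ with the ball $B(x_0,\epsilon)\subseteq C$. Since $g^i(C)=C$ and $\|g^{-i}\|\leq N$, one checks $B(g^ix_0,\epsilon/N)\subseteq g^i\big(B(x_0,\epsilon)\big)\subseteq C$. Averaging with equal weights and using convexity of $C$ gives $B(A_mx_0,\epsilon/N)\subseteq C$ for every $m$; letting $m\to\infty$ and using that $C$ is closed yields $B(Px_0,\epsilon/N)\subseteq C$. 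Hence $y:=Px_0\in C^{\circ}$ with $g(y)=y$, which is exactly (1). This is precisely where hypothesis (2) is used in an essential way.

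For the converse (1)$\Rightarrow$(2) I would exploit boundedness of order intervals in a salient cone. Choose a basis $c_1,\dots,c_n\in C$ of $V$, which exists because $C$ spans. Since $x\in C^{\circ}$, for each $j$ there is $t_j>0$ with $x-t_jc_j\in C$; applying $g^i$ and using $g^i(x)=x$ together with $g^i(C)=C$ gives $x-t_jg^ic_j\in C$, so $g^ic_j$ lies in the order interval $\{w\in C:\ x/t_j-w\in C\}$ for all $i\in\bZ$. Because $C$ is closed and contains no line, such order intervals are bounded: a normalized escaping sequence would converge to a nonzero $w$ with both $w\in C$ and $-w\in C$, i.e. a line in $C$, a contradiction. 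Thus $\{g^ic_j\}_{i\in\bZ}$ is bounded for each $j$, and since the $c_j$ form a basis this bounds $\|g^i\|$ uniformly, giving (2). Finally, undoing the scaling $g=q^{-1}f$ transports all three conclusions back to $f$.
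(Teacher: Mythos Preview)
The paper does not give its own proof of this lemma: it is quoted verbatim from \cite[Proposition~2.9]{MZ18-Adv} and used as a black box in the proof of \cref{MainThm2}. So there is no argument in the paper to compare yours against.

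That said, your proof is correct and self-contained. The reduction to $q=1$ is clean; the Jordan-form argument for the spectral conclusion from (2) is standard and sound; the Ces\`aro-averaging step for (2)$\Rightarrow$(1) is the right idea, and your uniform interiority estimate $B(g^ix_0,\epsilon/N)\subseteq C$ is exactly what makes the limit stay in $C^{\circ}$ (note also that $0\notin C^{\circ}$ since $C$ contains no line, so the fixed point you produce is nonzero); and the order-interval argument for (1)$\Rightarrow$(2) is correct, using that salient closed cones have bounded order intervals. One small remark: you are tacitly assuming $V$ is finite-dimensional (Jordan form, finite basis in $C$, compactness of the unit sphere). The phrase ``positive dimensional real normed vector space'' in the statement is slightly ambiguous, but in the intended application $V=N_{\bR}^1(X)$ is finite-dimensional, and the original result in \cite{MZ18-Adv} is stated in that setting, so this is harmless.
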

\begin{proof}[\textup{\bf Proof of  \cref{MainThm2}.}]
Take an element $g\in G$.
Then $g^*[B_g]=[B_g]$ for some big $\bR$-divisor $B_g$ as an interior point in the pseudo-effective cone of $X$.
Note that the pseudo-effective cone and nef cone of $X$ are invariant by $\Aut(X)$.
By \cref{Meng-Zhang-Lem}, we get (2) in \cref{Meng-Zhang-Lem}.
Applying \cref{Meng-Zhang-Lem} to the nef cone ($=C$) of $X$, we obtain (1) in \cref{Meng-Zhang-Lem}.
So there is an ample $\bR$-divisor $H_g$ such that $g^*[H_g]=[H_g]$ and $g^*$ is a diagonalizable map with all eigenvalues of modulus 1.
By \cite[Lemma 3.5]{MZ18-Adv}, we may assume $H_g$ is an ample Cartier divisor.
Take $L:=N_{\bZ}^1(X)/\textup{(torsion)}$.
Let $D$ be a line bundle over $X$ and consider the {\it polarization map}
 \begin{equation*}
 \tau_D: \Aut(X)\to\mathrm{Pic}~(X), ~~g\longmapsto g^*(D)\otimes D^{-1},
 \end{equation*}
 which takes the identity to the trivial bundle, and hence $\Aut_0(X)$ to $\mathrm{Pic}^0(X)$.
This yields that $\Aut_0(X)|_L=\{\mathrm{id}\}$.
Let $r:=\mathrm{rank}(L)$.
Notice that the characteristic polynomial $f(x)$ of  $g|_L$ is a monic polynomial of degree $r$ over $\bZ$  whose all roots $\lambda$ are all of modulus 1.
 By Gauss's lemma, the minimal polynomial $p(x)$ of $\lambda$ has $p(x)|f(x)$ in $\bZ[x]$.
Then $p(x)$ is an irreducible monic polynomial over $\bZ$ whose all roots have absolute value 1 and $\deg(p(x))\le r$.
Thus,  all $\lambda$ are all roots of unity by Kronecker's Theorem.
So there is a minimal positive integer $d$ such that $\lambda^d=1$.
This yields that $p(x)$ is a cyclotomic polynomial of degree $\varphi(d)$ and $\varphi(d)\le r$.
Here $\varphi(d):=|\mathrm{Gal}(\bQ(\zeta_d)/\bQ)|$ is the Euler function.
There are finitely many $d$ with $\varphi(d)\le r$.
Let $m$ be their product, which is independent of $g$.
Then every eigenvalue of $g^m|_L$ is 1.
Moreover, $g^m|_L=\mathrm{id}$ since $g|_L$ is diagonalizable.
Take $\rho: G\to \mathrm{GL}(L\otimes_{\bZ} \bC)$, and $\ker{\rho}:=G_1$.
Now applying Burnside's theorem to $\mathrm{GL}(L\otimes_{\bZ} \bC)$,  $G|_L$ is finite and $[G:G_1]<\infty$.
In particular, $G_1|_L$ is trivial.
This yields that $G_1$ fixes any ample class $[H]$ and then $G_1\le\Aut_{[H]}(X)$.
Thus $G_1$ (and hence $G$) is virtually in $\Aut_0(X)$ by \cite[Remark 2.6]{MZ18}.
This completes the proof  of \cref{MainThm2}.
\end{proof}
The following lemma implies \cref{MainThm}(1).
\begin{lemma}\label{MainLem1}
\text{(cf. \cite[Lemma 2.8]{DOZ18})}
Let $X$ be a projective variety of dimension $n$. Let $G$ be a subgroup of $\Aut(X)$ with only zero entropy elements. Then there is a finite-index subgroup $G'$ of $G$ satisfying the following properties for every $1\le k\le n-1$.
\begin{enumerate}
\item The kernel of the canonical representation
\begin{equation*}
                                \rho_k: G'\longrightarrow \mathrm{GL}(N_{\bR}^k(X))
\end{equation*}
is equal to $G_0'$;
\item The image  of $\rho_k$ is a unipotent subgroup of $\mathrm{GL}(N_{\bR}^k(X))$.
\end{enumerate}
\end{lemma}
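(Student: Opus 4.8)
The plan is to transcribe the proof of \cite[Lemma~2.8]{DOZ18} into the present setting, replacing $H^{k,k}(X,\bR)$, $\mathcal{K}_i(X)$ and the K\"ahler cone by $N^k_{\bR}(X)$, $\Nef^i(X)$ and the ample cone, and feeding in the Fujiki--Lieberman type statement \cref{MainThm2} and the Hodge-index theorem for $\bR$-Cartier divisors \cite[Proposition~2.9]{Hu19} in place of the Hodge-theoretic ingredients. Write $(G_i)_0:=G_i\cap\Aut_0(X)$ for a subgroup $G_i\le\Aut(X)$. Since $\Aut_0(X)$ is connected it acts trivially on each lattice $N^k_{\bZ}(X)$, so the inclusion $(G_i)_0\subseteq\Ker\rho_k$ always holds for $1\le k\le n-1$; only the reverse inclusions and the unipotence of the images $\rho_k(G_i)$ need work, and one treats the cases $k=1$ and $2\le k\le n-1$ in turn.

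\emph{The case $k=1$.} For $g\in G$ both $g$ and $g^{-1}$ have zero entropy, so $\rho(g^*|_{N^1_{\bR}(X)})=\rho((g^{-1})^*|_{N^1_{\bR}(X)})=1$; hence every eigenvalue of $g^*$ on the lattice $L:=N^1_{\bZ}(X)/\textup{(torsion)}$ is an algebraic integer all of whose Galois conjugates lie on the unit circle, so by Kronecker's theorem it is a root of unity, of order bounded purely in terms of $\rank L$. Thus some uniform power of $g^*|_{N^1_{\bR}(X)}$ is unipotent, and \cref{DOZ18Lem2.1} yields a finite-index subgroup $G_1\le G$ with $\rho_1(G_1)$ unipotent. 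Next, $\Ker(\Aut(X)\to\GL(N^1_{\bR}(X)))$ fixes every ample divisor class, which is big, so by \cref{MainThm2} it is virtually contained in $\Aut_0(X)$; hence $\bar K:=\Ker(\rho_1|_{G_1})/(G_1)_0$ is a \emph{finite} normal subgroup of $\bar G_1:=G_1/(G_1)_0$ with $\bar G_1/\bar K\cong\rho_1(G_1)$ a finitely generated torsion-free nilpotent group (a unipotent subgroup of $\GL(L)$ being automatically finitely generated). Then the centralizer $\bar H$ of $\bar K$ in $\bar G_1$ has finite index, is finitely generated, and is nilpotent (being a central extension of the nilpotent group $\bar H/(\bar H\cap\bar K)\hookrightarrow\bar G_1/\bar K$ by the finite central subgroup $\bar H\cap\bar K=Z(\bar K)$); so $\bar H$ has a finite-index torsion-free subgroup $\bar H'$, and $\bar H'\cap\bar K\subseteq\bar H'\cap Z(\bar K)=\{1\}$. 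Pulling $\bar H'$ back to $G_1$ produces a finite-index subgroup $G_2\le G_1$ with $\rho_1(G_2)$ unipotent and $\Ker(\rho_1|_{G_2})=(G_2)_0$.

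\emph{The case $2\le k\le n-1$.} Fix an ample $\bR$-divisor class $\omega$; then $\omega^k$ lies in the interior of the salient, full-dimensional, $\Aut(X)$-invariant cone $\Nef^k(X)$ (the first place the positivity in \cite[Proposition~2.9]{Hu19} is used). For $g\in G_2$ the operator $g^*|_{N^1_{\bR}(X)}$ is unipotent, so $\lVert g^{*i}\omega\rVert$ grows polynomially in $\lvert i\rvert$; since $g^*$ is a ring homomorphism of $\bigoplus_j N^j_{\bR}(X)$ we have $g^{*i}(\omega^k)=(g^{*i}\omega)^k$, which therefore also grows polynomially. As $\omega^k$ is an interior point of the invariant salient cone $\Nef^k(X)$, every $v\in N^k_{\bR}(X)$ lies between $-c\,\omega^k$ and $c\,\omega^k$ for some $c>0$ (i.e.\ $c\,\omega^k\pm v\in\Nef^k(X)$), whence $\lVert g^{*i}v\rVert=O(\lVert g^{*i}(\omega^k)\rVert)$ and so $\lVert g^{*i}|_{N^k_{\bR}(X)}\rVert$ grows polynomially in $\lvert i\rvert$. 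Thus $\rho(g^*|_{N^k_{\bR}(X)})=\rho((g^{-1})^*|_{N^k_{\bR}(X)})=1$, and repeating the Kronecker argument with \cref{DOZ18Lem2.1} we pass to a finite-index subgroup $G_3\le G_2$ for which $\rho_k(G_3)$ is unipotent for every $1\le k\le n-1$ simultaneously; this proves (2). For (1) with $k\ge2$, let $g\in\Ker(\rho_k|_{G_3})$: then $g^*\omega$ is ample and $(g^*\omega)^k=g^*(\omega^k)=\omega^k$ in $N^k_{\bR}(X)$, and the Hodge-index (Khovanskii--Teissier) rigidity packaged in \cite[Proposition~2.9]{Hu19}---which forces ample $\bR$-classes $A,B$ with $A^k=B^k$ ($1\le k\le n-1$) to be proportional, hence equal---gives $g^*\omega=\omega$. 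So $g^*|_{N^1_{\bR}(X)}$ fixes the ample class $\omega$, hence by \cref{Meng-Zhang-Lem} (applied to the ample cone $\Nef(X)$ with $q=1$) it is diagonalizable with all eigenvalues of modulus $1$; being also unipotent (as $G_3\le G_1$), it is the identity, whence $g\in\Ker(\rho_1|_{G_3})=(G_3)_0$. Taking $G':=G_3$ completes the proof.

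\emph{Expected main obstacle.} The genuinely delicate point is the Hodge-theoretic input in arbitrary characteristic: over $\bC$ one would invoke classical Hodge theory and hard Lefschetz for the interiority of $\omega^k$ in $\Nef^k(X)$ and for the rigidity $A^k=B^k\Rightarrow A=B$, but in positive characteristic and for singular $X$ these have to be deduced from the Hodge-index theorem for $\bR$-Cartier divisors \cite[Proposition~2.9]{Hu19} and its Khovanskii--Teissier corollaries (for instance via the log-concavity of $i\mapsto A^iB^{n-i}$, which the relation $A^k=B^k$ forces to be constant). Once those facts are matched to our situation, everything else---the finite-index reductions and the elementary theory of virtually nilpotent groups---is routine, exactly as in \cite{DOZ18}.
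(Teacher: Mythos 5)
Your argument is correct, and for part (1) it is essentially the paper's proof: the paper simply adopts the argument of \cite[Lemma 2.8]{DOZ18} by reference, and what you write out --- the Kronecker/cyclotomic bound plus \cref{DOZ18Lem2.1} to get unipotence on $N^1_{\bR}(X)$, \cref{MainThm2} to make $\Ker(\rho_1)/(G_1)_0$ finite, the centralizer-of-a-finite-normal-subgroup trick to pass to a torsion-free finite-index subgroup and kill that kernel, and the Hodge-index rigidity $A^k=B^k\Rightarrow A=B$ for ample classes to reduce $\Ker(\rho_k)$ to $\Ker(\rho_1)$ --- is exactly that transcription, with \cref{MainThm2} and \cite[Proposition 2.9]{Hu19} inserted where the paper says they should be. The one genuine divergence is in part (2): to show every eigenvalue of $g^*|_{N^k_{\bR}(X)}$ has modulus $1$, you reprove the spectral-radius bound by hand, via the polynomial growth of $\lVert(g^{*i}\omega)^k\rVert$ together with the claim that $\omega^k$ lies in the interior of $\Nef^k(X)$ (so that the whole space is sandwiched between $\pm c\,\omega^k$); this is the original route of \cite{DOZ18}, and the interiority of $\omega^k$ is a true but not-quite-free positivity statement in arbitrary characteristic (it needs, e.g., a Chow-variety boundedness argument showing $\lVert\alpha\rVert\le C\,\omega^k\cdot\alpha$ on the pseudo-effective cone). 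The paper instead short-circuits this by quoting the log-concavity of dynamical degrees $d_k(g)\le d_1(g)^k$ from \cite[Corollary 2.11]{Hu19}, which gives $d_k(g)\le1$ in one line and avoids any discussion of the cone $\Nef^k(X)$ beyond its salience. Both routes are valid; the paper's is shorter given the cited input, yours is more self-contained but should either justify or explicitly cite the interiority of $\omega^k$ in $\Nef^k(X)$.
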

\begin{proof}
By  \cref{rmk-proof}(1) and  \cref{MainThm2}, the proof of \cref{MainLem1}(1) is the same as \cite[Lemma 2.8]{DOZ18}.
Here, we give another proof of \cref{MainLem1}(2), which is  slightly different from \cite[Proof of Lemma 2.8(2)]{DOZ18}.
We define {\it k-th  dynamical degree} (see eg. \cite[\S2.4]{Hu19}) by the natural pullback $g^*$ on $N_{\bR}^k(X)$ for any integer $0\le k\le n$.
Namely,
\begin{equation*}
                     d_k(g):=\rho\bigg( g^*|_{N_{\bR}^k(X)}\bigg)=\max\bigg\{ |\lambda| : \lambda \textup{ is an eigenvalue of } g^*|_{N_{\bR}^k(X)}\bigg\}.
\end{equation*}
Notice that $d_1(g)\le1$ since $g$ is of zero entropy.
By the log-concavity of dynamical degrees (cf. \cite[Corollary 2.11]{Hu19}), $d_k(g)\le d_1^k(g)$ for every $k$.
This yields that $d_k(g)\le1$ for every $k$.
Let $r_k:=\mathrm{rank} (N_{\bR}^k(X))$.
Notice that the characteristic polynomial $f_k(x)$ of  $g|_{N_{\bR}^k(X)}$ is a monic polynomial of degree $r_k$ over $\bZ$.
Hence, the product of all roots $\lambda_k$ of $f_k(x)$ is an integer.
So all $\lambda_k$ are algebraic integers of modulus 1 as $d_k(x)\le1$.
 By Gauss's lemma, the minimal polynomial $p_k(x)$ of $\lambda_k$ has $p_k(x)|f_k(x)$ in $\bZ[x]$.
 Then $p_k(x)$ is an irreducible monic polynomial over $\bZ$ whose all roots have absolute value 1 and $\deg(p_k(x))\le r_k$.
Thus,  all $\lambda_k$ are all roots of unity by Kronecker's Theorem.
So there is a minimal positive integer $d$ such that $\lambda_k^d=1$.
This yields that $p_k(x)$ is a cyclotomic polynomial of degree $\varphi(d)$ and $\varphi(d)\le r_k$.
Here $\varphi(d):=|\mathrm{Gal}(\bQ(\zeta_d)/\bQ)|$ is the Euler function.
There are finitely many $d$ with $\varphi(d)\le r_k$.
Let $m_k$ be their product, which is independent of $g$.
So $(g^{m_k})|_{N_{\bR}^k(X)}$ is unipotent.
According to \cref{DOZ18Lem2.1}, replacing $G$ by a finite-index subgroup, we have that $\rho_k(G)$ contains only unipotent elements of $\mathrm{GL}(N_{\bR}^k(X))$.
This completes the proof of \cref{MainLem1}(2).
\end{proof}
Let $G\le\Aut(X)$. We say that a rational map $f: X\dashrightarrow Y$ is {\it $G$-equivariant or $f$-equivariantly}, if there is a group homomorphism $\rho: G\to \Aut(Y)$ such that $f\circ g=\rho(g)\circ f$ for all $g\in G$.
In \cite[Lemma 2.9]{DOZ18}, the authors obtained a $G$-equivariant resolution by using a canonical resolution in \cite[Theorem 13.2]{BM97}.
However, the existence of the resolution of singularities is not known in positive characteristic to the best of our knowledge.
Note that the dynamical degrees in arbitrary characteristic is birational invariance (cf. \cite[Lemma 2.8]{Hu19}).
This motivates the following result, which  allows us to work with suitable birationally equivalent models.
\begin{prop}\label{birational}
\text{(cf. \cite[Lemmas 2.9 and 2.10]{DOZ18})}
Let $\pi: X_1\dashrightarrow X_2$ be a dominant rational map between projective varieties of the same dimension.
Let $G$ be a group acting on both $X_1$ and $X_2$ biregularly and $\pi$-equivariantly.
Then the following statements hold.
\begin{enumerate}
\item[(1)] Let  $\widetilde{X_1}$ be the normalization of the graph of the map  $\pi: X_1 \dashrightarrow X_2$.
 Then there is a birational map $p_1: \widetilde{X_1}\to X_1$ such that $\pi\circ p_1: \widetilde{X_1}\to X_2$ is a $G$-equivairant surjective morphism.
    In particular, $p_1^{-1}\circ G\circ p_1$  is a subgroup of $\Aut(\widetilde{X_1})$ and is isomorphic to $G$.
\item[(2)] Suppose $G|_{X_1}$ (or equivalently $G|_{X_2}$) is of zero entropy.
Then we have
\begin{equation*}
                                  \ell_{\mathrm{ess}}(G|_{X_1}, X_1)=\ell_{\mathrm{ess}}(G|_{X_2}, X_2).
\end{equation*}
Further, replacing $G$ by a finite-index subgroup, $(G|_{X_1})/(G|_{X_1})_0\cong (G|_{X_2})/(G|_{X_2})_0$.
\end{enumerate}
\end{prop}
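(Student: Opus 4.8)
The overall plan is to transcribe the argument of \cite[Lemmas 2.9 and 2.10]{DOZ18}, making two systematic substitutions: the $G$-equivariant \emph{resolution} of the graph (which in \cite{DOZ18} came from the canonical resolution of \cite{BM97}, unavailable in positive characteristic) is replaced by the $G$-equivariant \emph{normalization} of the graph, and the groups $H^{\bullet,\bullet}$ together with K\"ahler cones are replaced by the numerical groups $N_{\bR}^{\bullet}$ and the nef cones. For Part~(1), let $\Gamma_{\pi}\subseteq X_1\times X_2$ be the closure of the graph of $\pi$ over its domain of definition $U$, which is open and $G$-stable because $\pi$ is $G$-equivariant as a rational map. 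The diagonal action of $G$ on $X_1\times X_2$ carries the open dense graph into itself, hence preserves $\Gamma_{\pi}$ after taking Zariski closures, so $G$ acts biregularly on $\Gamma_{\pi}$. Since normalization is a canonical construction, every automorphism of $\Gamma_{\pi}$ lifts uniquely to $\widetilde{X_1}$; thus $G$ acts biregularly on $\widetilde{X_1}$ and the normalization $\widetilde{X_1}\to\Gamma_{\pi}$ is $G$-equivariant. Composing with the two projections, $p_1\colon\widetilde{X_1}\to X_1$ is birational because $\mathrm{pr}_1\colon\Gamma_{\pi}\to X_1$ is an isomorphism over $U$, whereas $\pi\circ p_1\colon\widetilde{X_1}\to X_2$ is an honest morphism and is surjective because $\mathrm{pr}_2(\Gamma_{\pi})$ is closed in $X_2$ and contains the dense set $\pi(U)$; both maps are $G$-equivariant by construction. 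The induced homomorphism $G\to\Aut(\widetilde{X_1})$ is injective: if $g$ acts trivially on $\widetilde{X_1}$ then from $p_1\circ g=g\circ p_1$ one gets $p_1=g\circ p_1$, so $g$ fixes the dense image of $p_1$ in $X_1$, whence $g=\mathrm{id}$ (and so on $X_2$); this yields the last assertion of~(1).

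For Part~(2), by Part~(1) and the birational invariance of dynamical degrees \cite[Lemma 2.8]{Hu19} we may replace $X_1$ by $\widetilde{X_1}$, so that $\pi$ becomes a surjective generically finite morphism. Its Stein factorization $\widetilde{X_1}\to\overline{Z}\xrightarrow{\,h\,}X_2$ is canonical, hence $G$-equivariant, and factors $\pi$ into a birational morphism followed by a finite surjective morphism $h$ (generically finite with connected fibres forces birational); after normalizing where necessary, all varieties in sight are normal. It therefore suffices to treat, separately, a $G$-equivariant birational morphism $q\colon Z\to W$ and a $G$-equivariant finite surjective morphism $h\colon Z\to W$ of normal projective varieties, with $G$ of zero entropy acting faithfully on both, and to prove in each case that after replacing $G$ by a suitable finite-index subgroup $G'$ one has $(G'|_Z)_0=(G'|_W)_0$ as subgroups of $G'\cong G'|_Z\cong G'|_W$; the two conclusions of \cref{birational}(2) then follow using \cref{Lem1}. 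Applying \cref{MainLem1} to $Z$ and to $W$ simultaneously, we fix $G'$ so that $G'$ acts unipotently on every $N_{\bR}^k(Z)$ and $N_{\bR}^k(W)$ with $1\le k\le n-1$, and $\ker\rho_1$ equals $(G'|_Z)_0$, resp.\ $(G'|_W)_0$; shrinking $G'$ further, we may assume it fixes each of the finitely many $q$-exceptional prime divisors. In both cases the inclusion $(G'|_Z)_0\subseteq(G'|_W)_0$ is immediate, since the pullback $N_{\bR}^1(W)\hookrightarrow N_{\bR}^1(Z)$ is injective and $G'$-equivariant.

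For the reverse inclusion, take $g\in(G'|_W)_0$, so $g|_W$ acts trivially on $N_{\bR}^1(W)$. In the birational case there is a $G'$-equivariant decomposition $N_{\bR}^1(Z)=q^{*}N_{\bR}^1(W)\oplus\cE$, with $\cE$ the span of the $q$-exceptional prime divisors; $g|_Z$ is trivial on $\cE$ (each generator is $G'$-fixed) and trivial on $q^{*}N_{\bR}^1(W)$, hence on all of $N_{\bR}^1(Z)$, so $g\in(G'|_Z)_0$. In the finite case, fix an ample class $A$ on $W$; then $(g|_W)^{*}A=A$, hence $(g|_Z)^{*}(h^{*}A)=h^{*}((g|_W)^{*}A)=h^{*}A$, and $h^{*}A$ is ample because $h$ is finite. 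Now argue as in the proof of \cref{MainThm2}: by the Hodge-index theorem for $\bR$-Cartier divisors \cite[Proposition 2.9]{Hu19}, the quadratic form $D\mapsto (D^{2}\cdot(h^{*}A)^{n-2})$ on $N_{\bR}^1(Z)$ has signature $(1,\rank N_{\bR}^1(Z)-1)$, with $h^{*}A$ spanning the positive line and $(h^{*}A)^{\perp}$ negative definite; $(g|_Z)^{*}$ preserves this form and fixes $h^{*}A$, so it restricts to an isometry of the negative-definite space $(h^{*}A)^{\perp}$ and thus lies in a compact group, while being unipotent on $N_{\bR}^1(Z)$ — therefore $(g|_Z)^{*}=\mathrm{id}$ on $(h^{*}A)^{\perp}$ and on $\bR\,h^{*}A$, i.e.\ on all of $N_{\bR}^1(Z)$, so $g\in(G'|_Z)_0$. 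This gives $(G'|_Z)_0=(G'|_W)_0$ in both cases, completing~(2).

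The step I expect to be the crux, and the only place where genuinely new work beyond \cite{DOZ18} is required, is precisely the absence of resolution of singularities: one is forced to work with the possibly singular normalization $\widetilde{X_1}$ of the graph, and with singular normal models in the Stein factorization. This is what makes it essential to (a) phrase everything in terms of $N_{\bR}^{\bullet}$ and nef cones, which are defined for arbitrary projective varieties and behave well under birational maps; (b) invoke the Hodge-index theorem for $\bR$-Cartier divisors on a normal projective variety rather than classical Hodge theory; and (c) route the Fujiki--Lieberman input through \cref{MainThm2}, which is what underlies the appeal to \cref{MainLem1} (the uniform identification of $\ker\rho_k$ with $G_0'$). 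A secondary point is the mild circularity in ``replacing $X_1$ by $\widetilde{X_1}$'': this is harmless because the birational case is established in its own right, independently of the general statement. Finally, in the birational case one must keep careful track of the fact that the $N_{\bR}^1$-decomposition requires passing, if necessary, to a $\bQ$-factorial (or sufficiently small) birational model, which is again legitimate by birational invariance of the invariants involved.
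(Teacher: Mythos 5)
Your Part (1) and the finite-morphism half of Part (2) are fine and close in spirit to what the paper intends (the paper itself only gives a pointer to \cite[Lemmas 2.9 and 2.10]{DOZ18} with the substitutions graph-normalization for canonical resolution and $N_{\bR}^k$ for $H^{k,k}$). But your treatment of the birational case in Part (2) has a genuine gap, and it is exactly the step where you deviate from the route the paper has set up. You reduce via Stein factorization to a $G$-equivariant birational morphism $q\colon Z\to W$ and there invoke the decomposition $N_{\bR}^1(Z)=q^{*}N_{\bR}^1(W)\oplus\cE$ with $\cE$ spanned by the $q$-exceptional prime divisors. This decomposition is false in general: for a small birational morphism (e.g.\ a small resolution of a cone over a quadric surface) one has $\cE=0$ while $q^{*}$ is not surjective, and the intermediate variety $\overline{Z}$ of a Stein factorization has no reason to be $\bQ$-factorial. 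Your proposed repair --- passing to a $\bQ$-factorial or ``sufficiently small'' model --- is not available here, since the existence of $\bQ$-factorializations rests on the MMP, which is not known in positive characteristic in the generality of this paper; so the birational case is not closed.

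The fix is the one the paper has already built the machinery for, and it makes the Stein factorization, the exceptional-divisor bookkeeping, and even the separate Hodge-index argument unnecessary. If $g\in(G'|_{X_2})_0$ then (after passing to a finite-index subgroup) $g|_{X_2}$ fixes an ample class $A$ on $X_2$, hence $g|_{\widetilde{X_1}}$ fixes $(\pi\circ p_1)^{*}A$, which is a nef and \emph{big} $\bR$-class on $\widetilde{X_1}$ because $\pi\circ p_1$ is generically finite and surjective. \cref{MainThm2} is stated precisely for big (not merely ample) classes in $N_{\bR}^1$ so that it applies at this point: the subgroup of such $g$ is virtually contained in $\Aut_0(\widetilde{X_1})$, giving $(G'|_{W})_0\subseteq(G'|_{Z})_0$ in one stroke for the full generically finite morphism. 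This is the analogue of the argument in \cite[Lemma 2.10]{DOZ18} (which uses the Dinh--Hu--Zhang theorem on big classes), and it is the reason \cref{MainThm2} is formulated the way it is. Your Hodge-index argument for the finite case is correct but is subsumed by this; I would keep it only as an alternative remark.
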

\begin{remark}
The proof of \cref{birational}(1) is the same as \cite[Proof of Lemma 2.9]{DOZ18} without using the canonical resolution  in \cite[Theorem 13.2]{BM97}.
After replacing $H^{1,1}(X,\bR)$ by $N_{\bR}^1(X)$, the proof of \cref{birational}(2) is the same as  \cite[Lemma  2.10]{DOZ18} by using \cite[Lemma 2.8]{Hu19}.
\end{remark}
Chevalley's theorem on algebraic groups  asserts that every algebraic group over a perfect field is `built up' from a linear algebraic group and an abelian variety.
By Chevalley's theorem and \cite[Theorem 14.1]{Ueno75}, if a complex projective variety $X$ is not uniruled, then $\Aut_0(X)$ is an abelian variety.
This result is also known in positive characteristic as follows.
\begin{lemma}\label{non-uniruled}
\text{(cf. \cite[Proposition 7.1.4]{Brion17})}
Let $X$ be a projective normal variety which is not uniruled.
Then $\Aut_0(X)$ is an abelian variety.
\end{lemma}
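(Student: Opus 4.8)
The plan is to run the usual Chevalley reduction and then contradict the non-uniruledness hypothesis. Since $k$ is algebraically closed, hence perfect, $(\Aut_X^0)_{\mathrm{red}}$ is a smooth connected algebraic subgroup scheme of $\Aut_X$, and by Chevalley's structure theorem it sits in a short exact sequence
\begin{equation*}
1\longrightarrow L\longrightarrow \Aut_0(X)\longrightarrow A\longrightarrow 1,
\end{equation*}
where $L$ is a connected normal linear algebraic subgroup and $A$ is an abelian variety. It therefore suffices to prove $L=1$, for then $\Aut_0(X)\cong A$ is an abelian variety. So I would assume $L\neq 1$ and aim for a contradiction.

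First I would extract a one-dimensional subgroup. If a maximal torus of $L$ is nontrivial it contains a copy of $\mathbb{G}_m$; otherwise $L$ is a nontrivial connected unipotent group and hence contains a copy of $\mathbb{G}_a$. In either case $\Aut_0(X)$ contains a subgroup $G$ isomorphic to $\mathbb{G}_m$ or $\mathbb{G}_a$. Since $\Aut_X$ acts faithfully on $X$, so does $G$, and as $G\neq 1$ the fixed locus $X^G$ is a proper closed subset; the $G$-stable open set $U:=X\setminus X^G$ is thus dense, and because $G$ is one-dimensional and connected, every $x\in U$ has a one-dimensional orbit $O_x$.

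Next I would show each such orbit is a rational curve and deduce uniruledness. The orbit map $G\to O_x$ is dominant and $G$ is an open subvariety of $\mathbb{P}^1$, so $O_x$ is unirational, hence rational by L\"uroth's theorem (valid for curves in any characteristic); equivalently $O_x\cong G/G_x$ with $G_x$ finite, and a quotient of $\mathbb{G}_m$ or $\mathbb{G}_a$ by a finite subgroup scheme is again $\mathbb{G}_m$ resp. $\mathbb{G}_a$. Hence the closure $\overline{O_x}\subseteq X$ is a complete rational curve through $x$, and composing the normalization $\mathbb{P}^1\to\overline{O_x}$ with the inclusion yields a non-constant morphism $\mathbb{P}^1\to X$ whose image contains $x$. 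So a rational curve passes through every point of the dense open set $U$, which forces $X$ to be uniruled, contradicting the hypothesis; therefore $L=1$ and $\Aut_0(X)$ is an abelian variety.

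The step I expect to be the main obstacle — really the only non-formal point — is the last implication, that having a rational curve through every point of a dense open subset makes $X$ uniruled in arbitrary characteristic; this is standard but must be invoked with care. A concrete way to see it, which I would include if a clean citation is awkward, is to apply Rosenlicht's theorem to obtain a $G$-stable dense open $V\subseteq U$ with a geometric quotient $\pi\colon V\to B$, note $\dim B=\dim X-1$, observe that the generic fibre of $\pi$ is a geometrically rational curve which has a rational point (because $\pi$ is, generically, a torsor under the special group $\mathbb{G}_m$ or $\mathbb{G}_a$), and conclude a dominant rational map $B\times\mathbb{P}^1\ratmap X$. Minor secondary points are the structure-theoretic input used to produce $\mathbb{G}_m$ or $\mathbb{G}_a$ inside $L$, and the fact that it is precisely the perfectness of $k$ that makes Chevalley's theorem — and the existence of $(\Aut_X^0)_{\mathrm{red}}$ as a subgroup scheme — available.
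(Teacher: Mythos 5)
Your proof is correct. The paper itself gives no argument for this lemma --- it simply cites \cite[Proposition 7.1.4]{Brion17} --- and your Chevalley-plus-orbit argument is essentially the standard proof underlying that reference: a nontrivial linear part $L$ yields a faithfully acting $\mathbb{G}_m$ or $\mathbb{G}_a$, whose one-dimensional orbits are rational curves sweeping out a dense open set, contradicting non-uniruledness. You also correctly identify and handle the only delicate step (passing from ``a rational curve through every point of a dense open set'' to uniruledness in arbitrary characteristic) via the Rosenlicht quotient and the fact that $\mathbb{G}_m$ and $\mathbb{G}_a$ are special, so the generic fibre is a rational curve with a $k(B)$-point; nothing further is needed.
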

The following \cref{derived length} implies \cref{MainThm}(2) by  \cref{Lem1,MainLem1}.
\begin{prop}\label{derived length}
Let $X$ and $G$ be as in \cref{MainThm}. Then we have
\begin{equation*}
                                          \ell_{\mathrm{ess}}(G, X)\le n-1.
\end{equation*}
\end{prop}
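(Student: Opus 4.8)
\emph{The plan is to convert \cref{derived length} into a statement about the single unipotent group $U:=G'/G_0'$ and then to induct on $n=\dim X$.} By \cref{MainLem1} I may replace $G$ by a finite-index subgroup $G'$ for which $\rho_1\colon G'\to\mathrm{GL}(N_{\bR}^1(X))$ has kernel exactly $G_0'$ and image $U:=\rho_1(G')\cong G'/G_0'$ a unipotent group preserving the nef cone $\mathrm{Nef}(X)$; since $G'$ acts by automorphisms, $U$ also preserves every numerical intersection product and the pseudo-effective cone. By the definition of $\ell_{\mathrm{ess}}$ together with \cref{Lem1} it is enough to prove $\ell(U)\le n-1$. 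I would run the induction on $n$: for $n=1$ one has $N_{\bR}^1(X)\cong\bR$ and automorphisms act on it by the identity, so $U=\{1\}$ and $\ell(U)=0=n-1$.

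For the inductive step, the first move is to produce a common fixed nef class. A unipotent group acting linearly on a real vector space and preserving a salient closed convex cone of full dimension fixes a nonzero vector of that cone (as in \cite{DOZ18}; for a single unipotent $g$ this follows by taking the limit of $g^{n}x/\|g^{n}x\|$ for $x$ interior, and the group case is handled in the same way). Applying this to $U$ and $\mathrm{Nef}(X)$ yields a nonzero nef class $L$ with $g^{*}L=L$ for all $g\in G'$. Now split into two cases. If $L$ is big, then \cref{MainThm2} applies and gives $[G':G_0']<\infty$; hence $U\cong G'/G_0'$ is a finite unipotent group over a field of characteristic $0$, so $U=\{1\}$ and $\ell_{\mathrm{ess}}(G,X)=0\le n-1$.

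The remaining, and genuinely delicate, case is $L$ nef but not big, so $L^{n}=0$. Here I would follow the argument of \cite{DOZ18} with the dictionary of \cref{rmk-proof}(1): replace the Hodge cohomology groups, the $\mathcal{K}_i$'s and the K\"ahler cone by $N_{\bR}^{k}(X)$, the $\mathrm{Nef}^{k}(X)$ and the ample cone, and — since resolution of singularities is unavailable in positive characteristic — replace the $G'$-equivariant resolution used in \cite{DOZ18} by the $G'$-equivariant birational model provided by \cref{birational}(1), on which dynamical degrees and $\ell_{\mathrm{ess}}$ are unchanged by \cref{birational}(2) (and using \cref{non-uniruled} where the uniruledness dichotomy enters). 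The existence of the nonzero non-big $G'$-fixed class $L$, combined with the higher-dimensional Hodge-index theorem for $\bR$-Cartier divisors (\cite[Proposition 2.9]{Hu19}) and the $G'$-invariance of intersection numbers, lets one peel off one layer: one obtains a normal subgroup $U_{1}\lhd U$ to which, via a strictly lower-dimensional model, the inductive hypothesis applies to bound both $\ell(U/U_{1})$ and $\ell(U_{1})$, and then the standard inequality $\ell(U)\le\ell(U/U_{1})+\ell(U_{1})$ assembles the bound $\ell(U)\le n-1$. Translating back through \cref{Lem1,MainLem1} gives $\ell_{\mathrm{ess}}(G,X)\le n-1$ and hence \cref{MainThm}(2).

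The main obstacle is this last case. Once \cref{MainThm2} is available the ``big'' case is immediate, but the non-big case is exactly where \cite{DOZ18} invokes tools with no characteristic-$p$ analogue: canonical equivariant resolution (\cite{BM97}) and, for the sharper bound $n-\max\{\kappa(X),1\}$, the Deligne--Nakamura--Ueno theorem. The former I would circumvent using \cref{birational}; the latter I would not, which is precisely why only the weaker bound $n-1$ is claimed (cf. \cref{rmk-proof}(2)). Accordingly, most of the work is the bookkeeping of checking, step by step, that \cite{DOZ18}'s dimension-reduction argument survives using only \cref{MainThm2}, \cite[Proposition 2.9]{Hu19}, \cref{birational}, \cref{non-uniruled} and \cref{Lem1}.
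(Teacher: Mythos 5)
Your opening moves are consistent with the paper's: reducing to the unipotent group $U=G'/G_0'$ via \cref{MainLem1} and \cref{Lem1}, the base case $n=1$, the existence of a nonzero $U$-fixed nef class $L$ (a unipotent group preserving a salient full-dimensional closed cone fixes a point of it), and the disposal of the case where $L$ is big via \cref{MainThm2}. The gap is in the case where $L$ is nef and not big, which is the entire content of the proposition. You propose to ``peel off one layer'' by passing to ``a strictly lower-dimensional model'' and inducting on dimension, but no such model is ever constructed, and none of the tools you invoke can produce one: \cref{birational} only supplies a $G$-equivariant \emph{birational} model of the \emph{same} dimension (the normalization of the graph of a dominant rational map between varieties of equal dimension), not an equivariant fibration onto a lower-dimensional variety. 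Extracting such a fibration from a non-big fixed nef class would require a nef-reduction, semi-ampleness, or Iitaka-fibration type input --- precisely the kind of tool (equivariant resolution \cite[Theorem 13.2]{BM97}, Deligne--Nakamura--Ueno) that \cref{rmk-proof}(2) identifies as unavailable in positive characteristic. Moreover, even granting the model, your inequality $\ell(U)\le\ell(U/U_1)+\ell(U_1)$ comes with no accounting of why the two summands total at most $n-1$.

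The argument the paper actually relies on (\cite[Proposition 3.1]{DOZ18}, transported through the dictionary of \cref{rmk-proof}(1)) involves no induction on $\dim X$ and no case split on bigness: it works entirely on $X$. One constructs a chain of $G'$-fixed nef classes $L_1,\dots,L_{n-1}$ with suitable nonvanishing intersection numbers against an ample class, and the normal series given by the subgroups acting trivially on $L_1\cdots L_j\cdot N^1_{\bR}(X)$. Each successive quotient preserves the bilinear form $(u,v)\mapsto L_1\cdots L_j\cdot u\cdot v$, which has exactly one positive eigenvalue by the higher-dimensional Hodge index theorem \cite[Proposition 2.9]{Hu19}; a unipotent subgroup of the orthogonal group of a form of signature $(1,\cdot)$ is abelian, so each quotient is abelian and $\ell(U)\le n-1$ follows directly (this is why \cref{rmk1} speaks of ``abelian quotient groups $H_{i+1}/H_i$''). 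You list \cite[Proposition 2.9]{Hu19} among your ingredients but never deploy it; this Hodge-index-to-orthogonal-group step is the missing idea, and without it your inductive scheme does not close.
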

\begin{rmk}
By \cref{rmk-proof}(1), the proof of \cref{derived length} uses the same argument  as \cite[Proposition 3.1]{DOZ18}.
\end{rmk}
\begin{corollary}
\label{corollary}
\text{(cf.  \cite[Corollary 3.2 and Proposition 5.5]{DOZ18})}
Let $X$ be a projective normal variety of dimension $n\ge1$.
Let $G\le\Aut(X)$ be a group of zero entropy automorphisms.
Suppose that $\Aut_0(X)$ is commutative (for instance, this holds when $X$ is not uniruled, see \cref{non-uniruled}).
Then
   \begin{enumerate}
    \item  We have the following (in)equalities
    \begin{equation*}
                                    \ell_{\min}(G)\le\ell_{\min}(G/G_0)+1=\ell_{\min}(G|_{N_{\bR}^1(X)})+1=\ell_{\mathrm{ess}}(G, X)+1\le n.
    \end{equation*}
    \item  If $\ell_{\min}(G)=n$, then we have
    \begin{equation*}
                                             G_0\ne \{1\}~~~~\text{ and    }~~~ \ell_{\mathrm{ess}}(G, X)=n-1.
    \end{equation*}
    \end{enumerate}
\end{corollary}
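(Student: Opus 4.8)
The plan is to derive both parts of \cref{corollary} formally, from \cref{derived length} together with the definition of $\ell_{\mathrm{ess}}(G,X)$ and \cref{Lem1,MainLem1}; the only place the hypothesis on $\Aut_0(X)$ is used is to make $G_0$ abelian. Note throughout that $G_0\lhd G$ since $\Aut_0(X)\lhd\Aut(X)$, so all quotients below make sense.

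\emph{Setup and the identification equalities.} By \cref{MainLem1} choose a finite-index subgroup $G'\le G$ with $\ker\rho_1=G_0'$, where $G_0':=G'\cap G_0$, and with $\rho_1(G')$ unipotent; then $G'/G_0'\cong G'|_{N_{\bR}^1(X)}$ is unipotent and $\ell(G'/G_0')=\ell_{\mathrm{ess}}(G,X)$ by definition. First, $G'G_0/G_0\cong G'/G_0'$ is a finite-index solvable subgroup of $G/G_0$, so $\ell_{\min}(G/G_0)\le\ell_{\mathrm{ess}}(G,X)$. Conversely, for any finite-index solvable $H\le G/G_0$ with preimage $\widetilde H\le G$, put $G'':=G'\cap\widetilde H$; then $G''$ has finite index in $G$, and $G''/(G''\cap G_0)=G''/(G''\cap G_0')$ is at once a finite-index subgroup of the unipotent group $G'/G_0'$—hence of derived length $\ell_{\mathrm{ess}}(G,X)$ by \cref{Lem1}—and (isomorphic to) a subgroup of $H$, so $\ell_{\mathrm{ess}}(G,X)\le\ell(H)$. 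Taking the minimum over $H$ gives $\ell_{\min}(G/G_0)=\ell_{\mathrm{ess}}(G,X)$. Running the identical argument with the surjection $\rho_1\colon G\to G|_{N_{\bR}^1(X)}$ (and the isomorphism $\rho_1(G')\cong G'/G_0'$) in place of $G\to G/G_0$ yields $\ell_{\min}(G|_{N_{\bR}^1(X)})=\ell_{\mathrm{ess}}(G,X)$.

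\emph{The outer bounds in (1).} The estimate $\ell_{\mathrm{ess}}(G,X)+1\le n$ is precisely \cref{derived length}. For $\ell_{\min}(G)\le\ell_{\min}(G/G_0)+1$, this is where commutativity of $\Aut_0(X)$ enters: $G_0$, being a subgroup of the abelian group $\Aut_0(X)$, is abelian, hence so is $G_0'$. From the extension $1\to G_0'\to G'\to G'/G_0'\to 1$ we get $(G')^{(\ell_{\mathrm{ess}}(G,X))}\subseteq G_0'$, and therefore $(G')^{(\ell_{\mathrm{ess}}(G,X)+1)}\subseteq[G_0',G_0']=\{1\}$. Thus $G'$ is a finite-index solvable subgroup of $G$ with $\ell(G')\le\ell_{\mathrm{ess}}(G,X)+1=\ell_{\min}(G/G_0)+1$; in particular $\ell_{\min}(G)$ is defined and bounded by this quantity. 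Combining with the previous paragraph gives the full chain in (1).

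\emph{Part (2).} If $\ell_{\min}(G)=n$, then the chain in (1) forces $\ell_{\mathrm{ess}}(G,X)+1=n$, i.e.\ $\ell_{\mathrm{ess}}(G,X)=n-1$. Suppose, for contradiction, that $G_0=\{1\}$. Then $G_0'=G'\cap G_0=\{1\}$, so $G'\cong G'/G_0'$ is unipotent, hence solvable with $\ell(G')=\ell_{\mathrm{ess}}(G,X)\le n-1$ by \cref{derived length}; as $G'$ has finite index in $G$ this gives $\ell_{\min}(G)\le n-1$, contradicting $\ell_{\min}(G)=n$. Hence $G_0\ne\{1\}$.

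I do not expect a genuine obstacle here: once \cref{derived length} is in hand, the statement is pure (finite) group theory. The one point requiring care is the bookkeeping in the first step—passing to a finite-index subgroup $G'$ of $G$ replaces $G_0$ by $G'\cap G_0$ rather than $G_0$ itself, so the equalities $\ell_{\min}(G/G_0)=\ell_{\min}(G|_{N_{\bR}^1(X)})=\ell_{\mathrm{ess}}(G,X)$ have to be obtained by comparing through a common refinement $G'\cap\widetilde H$ and invoking \cref{Lem1}, not by a direct isomorphism.
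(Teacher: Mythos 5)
Your proof is correct and follows essentially the same route as the paper, which simply defers to the argument of [DOZ18, Corollary 3.2]: identify $\ell_{\min}(G/G_0)$ and $\ell_{\min}(G|_{N_{\bR}^1(X)})$ with $\ell_{\mathrm{ess}}(G,X)$ via \cref{MainLem1} and \cref{Lem1}, use the abelian extension $1\to G_0'\to G'\to G'/G_0'\to 1$ to gain at most one in derived length, and bound by $n$ via \cref{derived length}. Your careful handling of the passage to finite-index subgroups (comparing through $G'\cap\widetilde H$) is exactly the bookkeeping the cited proof requires.
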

\begin{remark}
After replacing $H^{1,1}(X,\bR)$ by $N_{\bR}^1(X)$, the proof of \cref{corollary} is the same as \cite[Corollary 3.2]{DOZ18}.
\end{remark}
\section{On the nilpotency class of zero entropy groups}
For a group $G$ and a non-negative integer $c$, the  {\it lower center series} $G_{(c)}$ is defined inductively by
\begin{equation*}
                                           G_{(0)}:=G~~~\textup{ and }~~~G_{(i+1)}:=[G_{(i)}, G].
\end{equation*}
By definition, $G_{(c)}=\{1\}$ for some non-negative integer $c$ exactly when $G$ is nilpotent. We call the minimum of such $c$ the {\it nilpotency class}  of $G$ (when $G$ is nilpotent) and denote it by $\mathfrak{c}(G)$.
 A normal series $G=G_1\rhd G_2\rhd\cdots\rhd G_n=1$ with each $G_i\lhd G$ and $G_i/G_{i+1}\le Z(G/G_{i+1})$ is a {\it central series}.
 It is well known that a group $G$ is nilpotent if and only if it has a central series $G=G_1\rhd G_2\rhd\cdots\rhd G_n=1$.
 \begin{lemma}\label{nilpotent}
\text{(cf. \cite[Lemma 2.1]{Oguiso06})}
Let $V$ be a positive dimensional vector space and $G$ a nilpotent (or solvable) subgroup of $\mathrm{GL}(V)$.
Then:
\begin{enumerate}
\item Any subgroup of $G$ and any quotient group of $G$ are nilpotent (or solvable).
\item The Zariski closure $\overline{G}$ of $G$  in $\mathrm{GL}(V)$  is also nilpotent (or solvable).
\end{enumerate}
\end{lemma}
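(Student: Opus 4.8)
The plan is to handle the solvable and the nilpotent cases in parallel, working with the derived series $G^{(i)}$ and the lower central series $G_{(i)}$ respectively. Both series are "functorial enough" for the three operations at issue — restriction to a subgroup, passage to a quotient, and passage to Zariski closure — so the whole lemma reduces to one elementary group-theoretic observation plus one topological observation about $\GL(V)$.

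For part (1) everything is purely group-theoretic. If $H\le G$, then an immediate induction on $i$, using the monotonicity $[A',B']\le[A,B]$ whenever $A'\le A$ and $B'\le B$, gives $H^{(i)}\le G^{(i)}$ and $H_{(i)}\le G_{(i)}$ for all $i$; hence $G^{(\ell(G))}=\{1\}$ forces $H^{(\ell(G))}=\{1\}$, and likewise $G_{(\mathfrak c(G))}=\{1\}$ forces $H_{(\mathfrak c(G))}=\{1\}$. If $\pi\colon G\twoheadrightarrow Q$ is a surjective homomorphism, then $\pi([A,B])=[\pi(A),\pi(B)]$ gives, again by induction, $\pi(G^{(i)})=Q^{(i)}$ and $\pi(G_{(i)})=Q_{(i)}$, so $Q$ is solvable with $\ell(Q)\le\ell(G)$ and nilpotent with $\mathfrak c(Q)\le\mathfrak c(G)$.

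For part (2) the key point is that $\GL(V)$ with its Zariski topology is a topological group — multiplication and inversion are morphisms of varieties, hence continuous — so each one-variable commutator map $x\mapsto[x,y]$ (resp. $y\mapsto[x,y]$), with the other argument fixed, is a morphism. I would first prove the general claim: for subgroups $H,K\le\GL(V)$ one has $[\,\overline H,\overline K\,]\subseteq\overline{[H,K]}$, where bars denote Zariski closure. Indeed, fix $h\in H$: the morphism $y\mapsto[h,y]$ carries $K$ into the closed set $\overline{[H,K]}$, hence carries $\overline K$ into it, so $[h,\overline K]\subseteq\overline{[H,K]}$ for every $h\in H$; now fix $k\in\overline K$: the morphism $x\mapsto[x,k]$ carries $H$ into $\overline{[H,K]}$ by what was just shown, hence carries $\overline H$ into it, proving the claim. (Here I also invoke the standard fact that the Zariski closure of an abstract subgroup of an algebraic group is again a closed subgroup, so that all the closures in sight are genuine subgroups; only closedness of $\overline{[H,K]}$ is strictly used.) Applying this inductively with $H=K$ running through the terms of the derived series — $\overline G^{(1)}=[\overline G,\overline G]\subseteq\overline{[G,G]}=\overline{G^{(1)}}$, and if $\overline G^{(i)}\subseteq\overline{G^{(i)}}$ then $\overline G^{(i+1)}=[\overline G^{(i)},\overline G^{(i)}]\subseteq[\,\overline{G^{(i)}},\overline{G^{(i)}}\,]\subseteq\overline{G^{(i+1)}}$ — yields $\overline G^{(i)}\subseteq\overline{G^{(i)}}$ for all $i$, and similarly $[\,\overline{G_{(i)}},\overline G\,]\subseteq\overline{[G_{(i)},G]}$ gives $\overline G_{(i)}\subseteq\overline{G_{(i)}}$. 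Since the closure of a point is a point, $G^{(\ell(G))}=\{1\}$ implies $\overline G^{(\ell(G))}=\{1\}$, so $\overline G$ is solvable with $\ell(\overline G)\le\ell(G)$; likewise $G_{(\mathfrak c(G))}=\{1\}$ implies $\overline G$ is nilpotent with $\mathfrak c(\overline G)\le\mathfrak c(G)$.

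The one place requiring care — the "main obstacle", such as it is — is that the Zariski topology on $\GL(V)\times\GL(V)$ is strictly finer than the product topology, so one cannot simply assert that the two-variable commutator map is continuous for the product topology and feed closures in both slots at once; this is exactly why the argument above is arranged so that only the genuinely morphic one-variable restrictions are ever used, one slot at a time. Beyond this bookkeeping the proof is routine.
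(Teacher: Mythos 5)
Your proposal is correct and follows essentially the same route as the paper: the heart of the matter is the inclusion $[\overline H,\overline K]\subseteq\overline{[H,K]}$, obtained by fixing one argument of the commutator map at a time so that only genuine morphisms (one-variable restrictions) are used, exactly as in the paper's maps $\alpha_g$ and $\beta_f$. The only difference is presentational: you write out part (1) and the solvable case explicitly, whereas the paper delegates those to the cited lemma of Oguiso and only spells out the nilpotent half of (2).
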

\begin{proof}
When $G$ is solvable, this lemma is  \cite[Lemma 2.1]{Oguiso06}.
We only show (2) when $G$ is nilpotent.
It suffices to check that $\overline{[H,G]}=[\overline{H},\overline{G}]$ for $H\lhd G$.
Note that $[\overline{H},\overline{G}]$ is closed in $\overline{G}$ (see for instance \cite[\S17.2]{Humphreys75}).
Thus $\overline{[H,G]}\subset [\overline{H},\overline{G}]$.

Let us show the other inclusion.
Take $g\in G$.
Let us define the map $\theta_{g}$ by
\begin{equation*}
         \alpha_g: \overline{H} \to \overline{[H,G]}; g \mapsto f^{-1}g^{-1}fg.
\end{equation*}
Clearly, $\alpha_g$ is continuous and satisfies $\alpha_g(H)\subset [H,G]$.
Thus, $\alpha_g(\overline{H})\subset \overline{[H,G]}$.
Hence, $[\overline{H},G]\subset \overline{[H,G]}$.
Let $f\in\overline{H}$.
Let us define the map $\beta_f$ by
\begin{equation*}
                    \beta_f: \overline{G}\to \overline{G}; g \mapsto f^{-1}g^{-1}fg.
\end{equation*}
Clearly, $\beta_f$ is continuous and satisfies $\beta_f(G)\subset [\overline{H},G]$.
Since $[\overline{H},G]\subset \overline{[H,G]}$, we have $\beta_f(G)\subset \overline{[H,G]}$ as well.
Thus, $\beta_f(\overline{G})\subset\overline{[H,G]}$ and hence $[\overline{H},\overline{G}]\subset\overline{[H,G]}$.
\end{proof}
\begin{rmk}\label{rmk1}
In the proof of \cref{derived length}, the derived series of zero entropy groups follows from the abelian quotient groups $H_{i+1}/H_i$ as in \cite{DOZ18}.
Note that some lemmas in the proof of \cref{derived length} are also true after replacing the derived series by the lower center series.
For instance, $\mathfrak{c}(G'/G_0')$ does not depend on the choice of $G'$, where $G'$ is a finite-index subgroup of $G$ as in \cref{MainThm} such that $G'/G_0'$ is a unipotent group.
Using \cref{nilpotent}, this proof is similar as \cref{Lem1}.
Observe that it is hard to figure out a crucial result (which is similar to \cite[Proposition 2.6]{DOZ18}) for constructing a central series $G_i$ such that $G_i/G_{i+1}\le Z(G/G_{i+1})$.
These raise the following question.
\end{rmk}
\begin{question}\label{Que}
Let $X$ be a projective variety of dimension $n\ge1$ and $G$ a subgroup of $\Aut(X)$ such that all elements of $G$ have zero entropy.
Let $G'$ be a finite-index subgroup of $G$ such that $G'/G_0'$ is a unipotent group.
Is there a nonnegative integer $c$ such that $\mathfrak{c}(G'/G_0')$ is at most equal to $c$?
Moreover, is $c=n-1$ right?
\end{question}
For \cref{Que}, we quote the following result in \cite{Robinson96}, which is noticed by Fei Hu.
\begin{prop}
\text{(cf. \cite[5.1.12]{Robinson96})}
Let  $G$ be a  nilpotent group. Then $G$ is solvable, and we have:
\begin{equation*}
                                    \ell(G)\le \log_2 \mathfrak{c}(G)+1.
\end{equation*}
\end{prop}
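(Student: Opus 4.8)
The plan is to compare the derived series of $G$ with the lower central series $G_{(\bullet)}$, exploiting the fact that the former descends through the latter at a doubling rate. The one genuinely nonformal ingredient is the classical commutator growth inequality
\[
[G_{(i)},G_{(j)}]\subseteq G_{(i+j+1)}\qquad(i,j\ge 0),
\]
which in the traditional indexing $\gamma_m(G)=G_{(m-1)}$ is the well-known relation $[\gamma_i(G),\gamma_j(G)]\subseteq\gamma_{i+j}(G)$. I would obtain it by induction on $j$ (simultaneously for all $i$): the case $j=0$ is the definition $G_{(i+1)}=[G_{(i)},G]$, and the inductive step is the three subgroups lemma (a consequence of the Hall--Witt identity) applied to $G$, $G_{(j)}$ and $G_{(i)}$, using that each $G_{(m)}$ is normal in $G$. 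This is standard and may be quoted from any text on group theory.

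Granting this, the core step is to prove by induction on $k\ge 0$ that $G^{(k)}\subseteq G_{(2^k-1)}$. The base case is $G^{(0)}=G=G_{(0)}$, and the inductive step reads
\[
G^{(k+1)}=[G^{(k)},G^{(k)}]\subseteq[G_{(2^k-1)},G_{(2^k-1)}]\subseteq G_{(2^{k+1}-1)},
\]
using the commutator inequality with $i=j=2^k-1$. Now assume $G$ is nilpotent of class $c=\mathfrak{c}(G)\ge 1$, so that $G_{(c)}=\{1\}$ and hence $G_{(m)}=\{1\}$ for every $m\ge c$. Choosing an integer $k$ with $2^k-1\ge c$, i.e.\ $2^k\ge c+1$, we get $G^{(k)}=\{1\}$; in particular $G$ is solvable and $\ell(G)\le\lceil\log_2(c+1)\rceil$. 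Finally, if $m=\lceil\log_2(c+1)\rceil$ then $2^{m-1}<c+1$, so $2^{m-1}\le c$ since both sides are integers, whence $m\le\log_2(2c)=\log_2 c+1$. Combining the two displays yields $\ell(G)\le\log_2\mathfrak{c}(G)+1$.

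The only obstacle worth naming is the commutator inequality $[\gamma_i,\gamma_j]\subseteq\gamma_{i+j}$; once that is in place the rest is a short double induction followed by an elementary estimate on powers of two, so I expect no further difficulty.
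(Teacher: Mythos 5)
Your proposal is correct, and it is essentially the standard argument behind the result the paper merely cites (the paper gives no proof of its own, only the reference to Robinson, 5.1.12): one shows $G^{(k)}\subseteq\gamma_{2^k}(G)$ by induction from $[\gamma_i(G),\gamma_j(G)]\subseteq\gamma_{i+j}(G)$ and then reads off the bound on the derived length. Your index bookkeeping in the paper's convention $G_{(m)}=\gamma_{m+1}(G)$ and the final estimate $\lceil\log_2(c+1)\rceil\le\log_2 c+1$ for $c\ge1$ are both accurate, so there is nothing to correct.
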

\section*{Acknowledgments}
The author gratefully acknowledge De-Qi Zhang for many inspiring discussions and comments.
He would like to thank Fei Hu and Guolei Zhong for helpful communications and the anonymous referee for several suggestions.



\bibliographystyle{amsalpha}
\bibliography{../mybib}

\end{document}